\newtheorem{theorem}{Theorem}
\newtheorem{prop}[theorem]{Proposition}
\newtheorem{lemma}[theorem]{Lemma}
\newtheorem{cor}[theorem]{Corollary}
\newcommand{\BE}[1]{\begin{equation} \label{#1}}
\newcommand{\EE}{\end{equation}}
\newcommand{\EB}{\text{EB}}
\newcommand{\TR}{\text{TR}}
\newcommand{\EL}{{\rm E}_{-}}
\newcommand{\E}{{\bf E}\,}
\newcommand{\eps}{\varepsilon}
\newcommand{\LL}{{\rm L}_{-}}
\newcommand{\Prb}{{\bf Pr}}
\newcommand{\R}{{\bf R}}
\newcommand{\sgn}{{\rm sign}}
\newcommand{\Z}{{\bf Z}}
\newcommand{\Xn}{X[n]}
\newcommand{\remark}[1]{\medskip \noindent {\bf Remark #1} }
\title{Explicit error bounds for lattice Edgeworth expansions}
\author{J.P.~Buhler, A.C.~Gamst, R.L.~Graham, and A.W.~Hales}
\begin{document}
\maketitle

\begin{abstract}
Motivated, roughly, by comparing the mean
and median of an IID sum of bounded lattice random variables, we develop
explicit and effective bounds on the errors involved in the one-term
Edgeworth expansion for such sums.
\end{abstract}

% Sawtooth function \reflectbox{N}

Let $X$ be a bounded integer-valued random variable (these will occasionally 
be referred to below as ``dice''), and let
$X[n]$ denote the sum of $n$ independent and identically distributed (IID) 
copies of~$X$ (sometimes referred to as ``rolls'').
If $p_x$ denotes $\Prb(X = x)$, then we say that $x$ is a {\em value} of~$X$ if
$p_x > 0$, and the mean of~$X$ is 
\[
\E X  = \mu_1(X) = \mu_1 = \sum_x \; x \, p_x ,
\]
the higher (central) moments of $X$ are 
$\mu_k(X) = \mu_k = \E (X-\mu_1)^k$, and the standard deviation 
$\sigma = \sigma(X)$ is the square root of $\mu_2$.  (Here, and below, the 
random variable $X$ is omitted from the notation if it can be inferred easily from
context.)

The {\em tilt} of $X$ is
\[
T(X) := \Prb(X > \mu_1) - \Prb(X < \mu_1).
\]
Modulo the annoying question of the precise definition of the median, the sign of $T$ 
measures whether the median is to the right or left of the mean.  
The primary focus of this paper is on the sign of the tilt of $X[n]$ for large~$n$,
which will be denoted
\[
T_n = T_n(X) = T(X[n]). 
\]
As $n$ goes to infinity the Central Limit Theorem says that the distribution
of $X[n]$, suitably shifted and scaled, converges to the standard normal
distribution.  
This implies that the tilt goes to zero as $n$ goes to infinity, and this 
will require us to accurately estimate $T_n$ in order
to say anything at all about its sign. 
This will be done by using the one-term Edgeworth expansion (an asymptotic refinement
of the Central Limit Theorem) for lattice random variables.  The key goal of this
paper is to prove explicit formulas for the error in these approximations sufficient
to enable the determination of the sign of $T_n$, for {\em all}~$n$.

It turns out that the sign of $T_n$ for large~$n$ depends on
the third moment $\mu_3$ (associated with the ``skew'' or ``tilt'' of the distribution)
but also, perhaps more surprisingly, on the congruence class of~$n$ modulo the so-called
``span'' of~$X.$  We will see that for large~$n$ the sign of the tilt is (almost always)
completely determined by these two pieces of data.

To make this more precise, it is convenient to make some definitions.
The {\em span} of a die $X$ is the largest integer $b$ such that all values of $X$ are contained
in a single congruence class modulo~$b$, i.e., an arithmetic progression of the form 
$a+bn$, for~$n \in \Z$.
The integer $a$ is called a {\em shift} of $X$, and it is only well-defined modulo~$b$.
It isn't hard to see that the span is the gcd (greatest common divisor) of all $x-a$ as $x$ ranges
over the values of $X$.  (For the span to be nonzero, $X$ has to have at least two values,
and we will always assume that this is the case.)

If $x$ is an integer and $b$ is any positive integer then let $x \bmod b$ denote the 
unique integer congruent to~$x$ modulo~$b$ that is in the interval $[0,b)$.

\begin{theorem}
% return ((X.b-c)%X.b-c%X.b)/X.sigma - self.mu3/3
\label{thm:TnLim}
Let $X$, $\sigma$, $a$, $b$, and $\mu_3$ be as above.
Then for positive integers $n$,
\[
T_n = \frac{L(na)}{\sqrt{2\pi n}} + E
\]
where
\[
L(c) =   \frac{(-c)\bmod b - c \bmod b}{\sigma}  - \frac{\mu_3}{3\sigma^3},
\]
and
\[
E = o(1/\sqrt{n}).
\]
\end{theorem}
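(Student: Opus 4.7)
The plan is to derive the formula from the one-term Edgeworth expansion for sums of i.i.d.\ lattice random variables, applied at $x = n\mu_1$. Writing $F_n(x) = \Prb(X[n] \leq x)$, the identity
\[
T_n \;=\; 1 - F_n(n\mu_1) - F_n\!\big((n\mu_1)^-\big)
\]
reduces the problem to expanding $F_n$ just at and just below the mean and taking the symmetric average; the difference $F_n(n\mu_1) - F_n((n\mu_1)^-) = \Prb(X[n] = n\mu_1)$, i.e.\ any atom at $n\mu_1$, is automatically built into the identity.

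The required input is the classical one-term Edgeworth expansion for a lattice variable of span $b$ and shift $a$ with finite third moment (Petrov, \emph{Sums of Independent Random Variables}, Ch.~V.6, or Feller Vol.~II, XVI.5): uniformly in $x$,
\[
F_n(x) \;=\; \Phi(u) \;-\; \frac{\phi(u)}{\sqrt n}\!\left[\frac{\mu_3\,(u^2-1)}{6\sigma^3} \;+\; \frac{b}{\sigma}\,\psi\!\left(\frac{x - na}{b}\right)\right] \;+\; o(1/\sqrt n),
\]
where $u = (x-n\mu_1)/(\sigma\sqrt n)$ and $\psi(y) = \{y\} - 1/2$ is the Bernoulli sawtooth (taking the symmetric value $0$ at integers). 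At $x = n\mu_1$ one has $u=0$, $\Phi(0)=1/2$, $\phi(0) = 1/\sqrt{2\pi}$, and $u^2-1 = -1$. The Hermite piece therefore contributes $-\mu_3/(3\sigma^3)$ to the leading $1/\sqrt{2\pi n}$ coefficient of $T_n$, matching the second term of $L$.

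For the sawtooth piece, one evaluates $\psi$ at the argument $(n\mu_1 - na)/b$ and combines the left- and right-limits of $F_n$ (thereby also folding in the atom $\Prb(X[n]=n\mu_1)$ when $n\mu_1\in na+b\Z$). A short arithmetic identity tying the symmetric sawtooth value at this argument to the mod-$b$ residues of $\pm na$ rearranges the result into $((-na)\bmod b - na\bmod b)/\sigma$, giving the first term of $L(na)$. The $o(1/\sqrt n)$ control on $E$ then follows from the uniformity of the remainder in the Edgeworth expansion (guaranteed by the bounded third moment).

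The main obstacle is the careful boundary bookkeeping at lattice jumps. The CDF $F_n$ has jumps of size $\Theta(1/\sqrt n)$ at each value of the support (by the local limit theorem, $\Prb(X[n]=k) \approx b\phi(u_k)/(\sigma\sqrt n)$), the sawtooth $\psi$ itself is discontinuous by $1$ at each integer, and when $n\mu_1$ happens to land in $na + b\Z$ the atom at the mean is $\Theta(1/\sqrt n)$ and thus not absorbable into the error term. Taking the symmetric combination $1 - F_n(n\mu_1) - F_n((n\mu_1)^-)$ is the natural device for making the formula uniform across $n$: it collapses the lattice-atom case into the generic one, with the atom splitting evenly and the sawtooth's jump of $\pm 1$ at integer arguments cancelling out. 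Verifying this cancellation uniformly, independent of where $n\mu_1$ sits within a fundamental domain of $na+b\Z$, is the most delicate step.
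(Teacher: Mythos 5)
Your proposal is correct in substance but follows a genuinely different route from the paper. The paper does not invoke the classical lattice Edgeworth expansion at all; it re-derives the expansion from scratch via a contour-integral representation (Theorem~\ref{thm:TnCnt}), a sequence of explicitly controlled approximations $I_0 \simeq I_1 \simeq \cdots \simeq I_5$, explicit bounds on the characteristic function (Theorem~\ref{thm:CFbound}) and on the power series tails (Lemma~\ref{lem:CkB}), arriving at the quantitative Theorem~\ref{thm:main} and then Theorem~\ref{Tnbound}. Theorem~\ref{thm:TnLim} is a qualitative corollary of that machinery. You instead quote the known uniform lattice Edgeworth expansion from Petrov/Feller and evaluate it at $x = n\mu_1$ via the identity $T_n = 1 - F_n(n\mu_1) - F_n((n\mu_1)^-)$; I checked the arithmetic and it reproduces $L(na)$ exactly (in the nonlattice case $2\{-na/b\}-1 = ((-na)\bmod b - na\bmod b)/b$, and the lattice-atom case also comes out right). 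This is a perfectly legitimate and more economical path to the $o(1/\sqrt n)$ statement; what it cannot deliver is the explicit, finite-$n$ error bound that is the paper's actual objective, which is why the authors rebuild the expansion from the ground up.

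Two small cautions. First, your ``symmetric value $0$ at integers'' convention for $\psi$ must be paired with the symmetrized CDF $\tfrac12(F_n(x)+F_n(x^-))$; if instead you keep the cadlag $F_n$ and use $\psi(y)=\{y\}-\tfrac12$ with $\{k\}=0$, then $\psi(k)=-\tfrac12$ and the jump of the correction term matches the jump of $F_n$ given by the local limit theorem. Either convention works, but mixing them breaks the atom bookkeeping precisely in the $b\mid na$ case you flag as delicate. Second, both your argument and the paper's statement of Theorem~\ref{thm:TnLim} implicitly use $\mu_1 = 0$ (or $\mu_1 \equiv 0 \pmod b$): for general integer-valued $X$ the relevant lattice parameter is where $n\mu_1$ falls in $na + b\Z$, i.e. the residue of $n(\mu_1 - a)$, not of $-na$; the paper's own Theorem~\ref{Tnbound} makes the mean-$0$ hypothesis explicit, and the preceding normalization $X\mapsto (Y-\mu_1)/b$ is what reconciles the two. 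Your ``short arithmetic identity'' only closes when that hypothesis is in force, so it is worth stating.
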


Note that $L(na)$ only depends on the congruence class of~$n$ modulo~$b$,
and that if $n$ goes to infinity in a fixed congruence class 
then the limit of $\sqrt{2\pi n}\,T_n$ exists and is equal to $L(na)$.
The error $E$ will turn out to be bounded by terms that are, roughly, constant
multiples of $1/n$ and $\exp(-c\sqrt{n})/n^p$ for various $c$ and $p$.

Proofs of the lattice Edgeworth expansions in the literature do not seem to 
include explicit error bounds on the error~$E$, and our goal is to exhibit such 
bounds, for bounded lattice random variables.
Such bounds are necessary if one wants to find an $n_0$ together with a
{\em proof} that 
\[
n \ge n_0, \; an \equiv c \bmod b \quad \mbox{imply that} \quad
\sgn(T_n) = \sgn(L(c)).
\]
Briefly, one could say that ``asymptopia'' has arrived when the sign of $T_n$ is equal to
its asymptotic sign.

This question arose for us in \cite{BGH} where the existence of ``maximally
intransitive'' dice was shown.
We\footnote{Well, especially RLG} felt that it should be possible to
determine when asymptopia arrives, i.e., when
the desired dominance relation between the dice constructed in [MID]
was absolutely guaranteed for $n \ge n_0$ (see that article for details).

It is possible for $L(c)$ to be zero, though ``unlikely'' if $X$ is not symmetric.
In this case, higher order Edgeworth expansions are necessary, and this case will be left
to the motivated and energetic reader.

No prior understanding of Edgeworth expansions is required to read this paper, and we
consider only a specific case.  For a broader perspective, the reader could consult
\cite{Feller}, \cite{Petrov},  \cite{Bhattacharya}.
The techniques described here should be applicable more generally.

The first section below develops some of the basic ideas necessary needed to approximate 
$\Prb(X[n] < 0)$.  The second section proves such an approximation, together with
explicit error bounds.  The third section applies this to prove a refined version 
of Theorem~1, and looks at examples.

\section{Preliminaries}

It is convenient to focus on the case of real-valued $X$ with mean~0 and span~1.
If $Y$ is a die with span~$b$ then 
\[
X := \frac{Y -  \mu_1(Y)}{b}
\]
is a bounded lattice random variable with mean~0 and span~1, which says that the values of $X$
lie in a lattice $a+\Z$ but are not contained in a larger lattice $a+d\Z$, $d > 1$.
The tilt is invariant under affine transformations $X \to bX+a$ so
$T_n(Y) = T_n(X)$. 
It is convenient to fix this situation from now on:
$X$ will be a random variable with span~1, mean~0, and shift~$a$.
If $X$ arises from dice as above then the shift $a$ is a rational number.
In this case it might make sense to take a limit as $n$ goes to infinity
through a set of values where $\{na\}$ is fixed.  Here $\{x\}$ denotes the
{\em fractional part} of $x$, i.e., the unique $y$ such that $x = y+j$ for
some integer~$j$, and $0 \le y < 1$.
However, the explicit estimates apply for irrational~$a$ and an arbitrary~$n$, and
may be useful in other contexts.

The central limit theorem says that the {\em cumulative probability 
distribution}  of the normalized random variable $Z_n := X[n]/(\sigma \sqrt{n})$
approaches that of a standard normal random variable in the sense that
\[
\lim_{n \to \infty} \Prb\left(Z_n  \le s \right)  = 
\frac{1}{\sqrt{2\pi }} \int_{-\infty}^s e^{-t^2/2} dt, \qquad \mbox{for all} \; s \in \R.
\]
Our interest in the tilt suggests focusing on the mean $s = \mu_1(Z_n) = 0$.
Then $ \lim_{n \to \infty} \Prb\left(Z_n  \le 0 \right)  = 1/2$.  
The Berry-Esseen Theorem gives an explicit bound on the error, i.e., in the
case $s = 0$,
\[
\left | \Prb\left(Z_n  \le 0 \right)  - 1/2 \right | \le \frac{c}{\sqrt{n}}
\]
for a small constant~$c$, e.g., $c = 3$ in \cite{Feller}.
However, it is easy to show that the tilt $T_n$ is $O(1/\sqrt{n})$, so the Berry-Esseen
level of accuracy is insufficient for saying anything about the tilt
The central result of this paper is of the form
\[
\left | \Prb\left(Z_n  \le 0 \right)  - \left(1/2+\frac{\ell}{\sqrt{n}}\right) \right | \le E(n).
\]
Here $\ell$ depends on on the second, third, and fourth moments of $X$ and 
the fractional part $\{na\}$.
The error $E$ is bounded by an expression whose principal term is of the form $d/n$,
with about 7 other terms that are each of the form
$\lambda  e^{-\tau n^{\gamma}}/n^\rho$
for various constants $\lambda$, $\tau$, $\rho$, and $\gamma$.
Although this is a very special case of the central limit theorem, the techniques
should apply more generally.

As will be seen, this explicit bound on the error in the simplest non-trivial
Edgeworth expansion allows us to prove theorems about the sign of the tilt.

Readers might remember that the skewness of the distribution of 
$\Xn$ depends on the third moment of~$X$.
This is reflected in the term $\mu_3/3\sigma^3$ of $L(c)$ in 
Theorem~\ref{thm:TnLim}.
One intuitive way to see that the third moment and asymptotic tilt  might have
opposite signs
is that for large~$n$ the distribution of $\Xn$ should be approximately
normal, and if the median is slightly negative then the positive values
have to be somewhat larger to make the mean equal to~0, so the
third moment will be positive.

The other term in $L(c)$, which becomes $(\{-na\}-\{na\})/\sigma$ 
in the span~1 case, shows that for lattice random variables
the third moment does not give the full story.
This term is sometimes called the lattice correction term.
To get an intuitive feel for this term, consider
a lattice random variable $X$ of span 1 and shift~$a$  with mean and third moment
equal to~0 (a simple linear algebra exercise shows that these are easy to come
by).  
Since $\mu_3 = 0$ the lattice correction term is the only term.
The support of the probability distribution of $X[n]$ is contained in the set of
real numbers $x$ that are congruent to $na$ modulo~$1$.  For large $n$, the
probability distribution is close to a re-scaled version of the standard normal
distribution.  To first order, it seems reasonable to suspect that if 
$c = {na} $ is less than $1/2$ then the sum of the probabilities $p_x = \Prb(X= x)$
for $\{x\} = c$, $x > 0$, will be slightly greater than the corresponding
sum for $x < 0$ since the latter contribute more heavily to making the mean~0,
which tends to make the tilt positive.

Of course the explicit formula for $L(c)$ emerges cleanly from the calculations below, 
and perhaps this supersedes all of these heuristic remarks!

\subsection{The characteristic function}

As above, fix $X$ with mean 0, span 1, shift $a$, central moments $\mu_k$, and standard
deviation~$\sigma$ defined by $\sigma^2 = \mu_2$.
The goal of this section is to give a formula for $\Prb(X[n]) < 0$ in terms of an
integral.
This formula can be proved fairly directly using Fourier series, but we will take
a somewhat more leisurely approach that starts with a contour integral.

The probability generating function (PGF) of~$X$ is a function of a complex variable~$z$ 
defined by 
\[
F(z) = \E z^X = \sum_x p_x z^x = z^a \, \sum_j p_{a+j} z^j,
\]
using the fact that values of~$X$ can be written $x = a+j$ for some integer~$j$.
Note that $z^{-a} F(z)$ is a finite Laurent series:
\[
z^{-a}F(z) = \sum_x p_x z^{x-a} = \sum_j p_{a+j} z^j.
\]
Applying Cauchy's Theorem gives
\[
p_{a+j} = \Prb(X = a+j) = [z^j] \, z^{-a} F(z) = \frac{1}{2\pi i} \, 
    \oint_{\gamma} \, \frac{z^{-a} F(z)}{z^{j}} \, \frac{dz}{z} 
\]
where $[z^j] z^{-a} F(z)$ denotes the coefficient of $z^j$ in the polynomial $z^{-a}F(z)$, and the contour 
$\gamma$ can be chosen to be a counterclockwise circle around the origin.

With an eye to ultimately applying this to the tilt,
we use this integral to find a useful expression for $\Prb(X < 0)$.
The set of negative values $x = a+j$ is the set of $\{a\}+j$ as $j$ ranges over negative
integers (where $\{a\}$ is the fractional part of~$a$, defined above; this might reasonably
be denoted $a \bmod 1$).
Therefore,
\begin{align*}
\Prb(X < 0) &  = \sum_{j<0} p_{\{a\}+j} =  \frac{1}{2\pi i} \, \oint \,  \left( z + z^2 + z^3 \ldots \right )
  \, z^{-\{a\}} \, F(z) \, \, \frac{dz}{z} \\
   & = \frac{1}{2\pi i} \, \oint \, \frac{z^{1-\{a\}}\,F(z)}{1-z} \, \frac{dz}{z} 
\end{align*}
where the radius is less than~$1$ to ensure that the geometric series converges.

If $n$ is a positive integer and $X[n] := \sum X_i$, where the $X_i$ are $n$ independent 
random variables, then independence implies that the PGF of $X[n]$ is $F(z)^n$.
Applying the preceding formula to $\Xn$ gives
\[
\Prb(\Xn < 0) = 
\frac{1}{2\pi i} \, \oint \, \frac{z^{1-\{na\}}\,F(z)^n}{1-z} \, \frac{dz}{z} 
\]
where the contour is a counterclockwise circle around the origin of radius slightly less than~1.
%\begin{center}
%\includegraphics[width=1.0in]{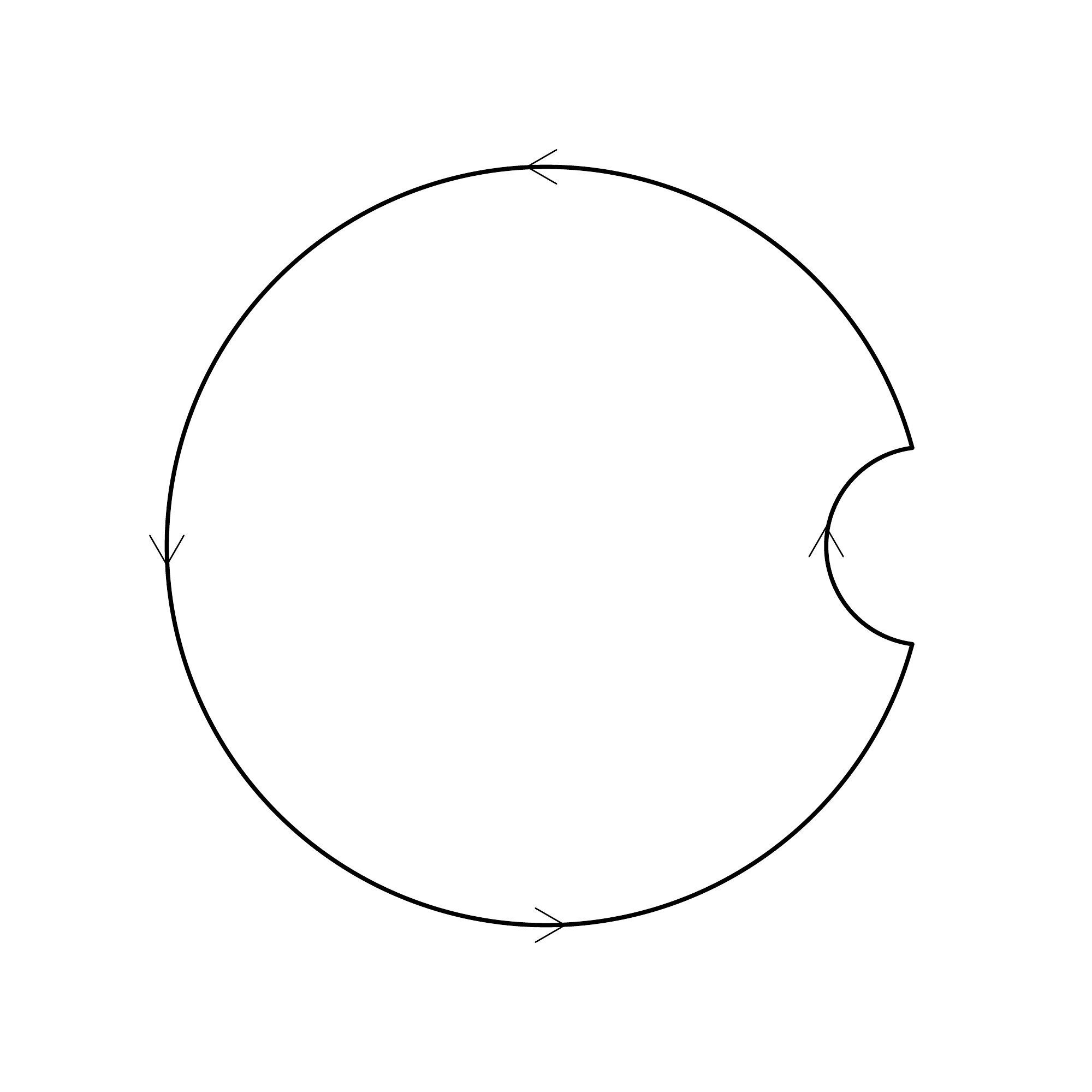}
%\end{center}
 \begin{center}
 \begin{tikzpicture}
   \draw (0,-1.5) -- (0,1.5);
   \draw (-1.5,0) -- (1.5,0);
   \begin{scope}
        \clip (-1,-1) rectangle (.982,1);
        \draw[decoration={markings,mark=at position 0.5 with {\arrow{<}}}, postaction={decorate}](1,0)[thick] circle(0.2);
        \draw[decoration={markings,mark=at position 0.375 with {\arrow{>}}}, postaction={decorate}](0,0)[thick] circle(1);
   \end{scope}
   %\draw(1.03,0)[white, thick,fill=white] circle(0.2);
 \end{tikzpicture}
 \end{center}
Move the contour outward to the unit circle except for an infinitesimal
semicircular divot centered at, and to the left of,~1.  
In other words, the contour follows the unit circle counterclockwise from
$z = e^{i\eps }$ to $z = e^{-i\eps }$ followed by a clockwise small circular arc back to
$e^{i\eps}$.  For very small $\eps$ the integrand is close to
$-1/(z-1)$, and the contour is basically a small clockwise semicircle; Cauchy's 
Theorem implies that the value of the integral over the divot is very close to $1/2$.
Taking the limit as $\eps$ goes to zero gives
\[
\Prb(\Xn < 0)  = \frac{1}{2} + \frac{1}{2\pi i} \, \oint \, \frac{z^{1-\{na\}}F(z)^n}{1-z} \, \frac{dz}{z} 
\]
where the contour is the unit circle punctured at $z = 1$, with the 
``principal value interpretation'' at the puncture.
With an eye to changing variables by $z = e^{it}$, let
\[
f(t) = F(e^{it}) = \E e^{itX} = \sum_x p_x \, e^{itx}
\]
be the {\em characteristic function} (CF) of~$X$.
Then
\[
\Prb(\Xn < 0)  
= \frac{1}{2} - \frac{1}{2\pi i } \, \int_{-\pi}^\pi \, e^{i \alpha t} \, f(t)^n\, 
D(t) \, \frac{dt}{t}
\]
where $\alpha = \alpha_n = 1/2-\{na\}$ and 
\[
D(t) = (t/2)/\sin(t/2).
\]  
The principal value interpretation of the integral at $t = 0$ will always be used, 
which means that 
\[
\int_{-\pi}^\pi := \lim_{\eps \to 0} \left ( \int_{-\pi}^{-\eps} + \int_{\eps}^{\pi} \right).
\]
The following result summarizes the above discussion.

\begin{theorem}
\label{thm:TnCnt}
With the above notation,
\begin{equation}
\label{eq:I1r}
\Prb(\Xn < 0)  = 1/2-I_0, \quad \mbox{where} \quad 
I_0 := \frac{1}{2\pi i } \, \int_{-\pi}^\pi \, e^{i \alpha t} \, f(t)^n\, D(t) \, \frac{dt}{t} .
\end{equation}
In addition, if $\alpha' = 1/2-\{-na\}$, then
\[
T_n  = \frac{1}{2\pi i } \, \int_{-\pi}^\pi \, e^{i \alpha t} \, f(t)^n\, D(t) \, \frac{dt}{t}  -
\frac{1}{2\pi i } \, \int_{-\pi}^\pi \, e^{i \alpha' t} \, f(-t)^n\, D(t) \, \frac{dt}{t}  .
\]
\end{theorem}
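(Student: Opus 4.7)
The first identity is essentially already done in the discussion preceding the statement; what remains is to formalize the change of variables $z = e^{it}$ and to identify the factor $D(t)$. The plan is to take the punctured-contour formula
\[
\Prb(\Xn < 0) = \frac{1}{2} + \frac{1}{2\pi i} \, \oint \, \frac{z^{1-\{na\}} F(z)^n}{1-z} \, \frac{dz}{z},
\]
parametrize the unit circle by $z = e^{it}$ with $t\in [-\pi,\pi]$ (so $dz/z = i\,dt$ and $F(z)^n = f(t)^n$), and use
\[
1 - e^{it} = -e^{it/2}\bigl(e^{it/2} - e^{-it/2}\bigr) = -2i\,e^{it/2}\sin(t/2)
\]
to simplify
\[
\frac{z^{1-\{na\}}}{1-z} \;=\; \frac{e^{i(1-\{na\})t}}{-2i\,e^{it/2}\sin(t/2)} \;=\; \frac{e^{i\alpha t}}{-2i\sin(t/2)}, \qquad \alpha = \tfrac{1}{2} - \{na\}.
\]
Multiplying by $f(t)^n \cdot (dz/z) = i f(t)^n\,dt$ and rewriting $1/\sin(t/2) = 2D(t)/t$, the integrand becomes $-e^{i\alpha t}f(t)^n D(t)/t$, which gives $\Prb(\Xn < 0) = 1/2 - I_0$.

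For the second identity, the clean observation is that $\Prb(\Xn > 0) = \Prb((-X)[n] < 0)$, and $-X$ is again a bounded lattice random variable with mean $0$ and span $1$, but with shift $-a$ and characteristic function $t \mapsto f(-t)$. I would then apply the first identity, already proved, with $-X$ in place of $X$: this replaces $\alpha$ by $1/2 - \{-na\} = \alpha'$ and $f(t)$ by $f(-t)$, producing $\Prb(\Xn > 0) = 1/2 - I_0'$, where $I_0'$ denotes the second integral in the theorem. Subtracting,
\[
T_n \;=\; \Prb(\Xn > 0) - \Prb(\Xn < 0) \;=\; (1/2 - I_0') - (1/2 - I_0) \;=\; I_0 - I_0',
\]
which is exactly the claimed expression.

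The only point requiring genuine care is the principal-value interpretation at $t = 0$ (equivalently $z = 1$), since the factor $1/t$ (resp.\ $1/(1-z)$) is non-integrable in the usual sense. The plan is to justify this by redoing the infinitesimal-divot computation directly on the $t$-side: for small $\eps$, the integrand $e^{i\alpha t}f(t)^n D(t)$ is smooth at $t = 0$ with value $1$ there (using $f(0) = 1$, $D(0) = 1$), so its contribution to $\int_{-\eps}^{\eps} \cdots \,dt/t$ cancels by symmetry up to $O(\eps)$. This legitimizes the principal value and confirms that the $1/2$ in each identity is exactly the "half-residue'' previously extracted at $z = 1$ on the contour. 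I expect this estimate near $t = 0$ to be the only real obstacle; everything else is bookkeeping.
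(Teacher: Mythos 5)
Your proposal is correct and follows the paper's own approach almost exactly: the paper establishes the first identity in the discussion preceding the theorem (Cauchy's formula, the punctured-contour step extracting the $1/2$, then the substitution $z=e^{it}$ that produces $D(t)$), and disposes of the second identity via exactly the three observations you use ($\Prb(\Xn>0)=\Prb((-X)[n]<0)$, the shift of $-X$, and the CF of $-X$). You have merely written out the $1-e^{it}=-2i\,e^{it/2}\sin(t/2)$ bookkeeping and the principal-value justification that the paper leaves implicit.
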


The last statement of the theorem follows easily from the first, using several obvious facts:
(1) $\Prb(\Xn > 0 ) = \Prb((-X)[n] < 0)$, (2) $\alpha'$ is the shift of $-X$, and 
(3) the CF of~$-X$ is $f(-t)$.

For later use, we remark that
$D(t)$ is even, has $D(0)=1$, 
$D(\pi) = \pi/2$,  and
has power series coefficients that can be expressed in terms of Bernoulli numbers and
are positive.  From that, or alternately by a simple calculus exercise, it follows
$D(t)$ is increasing on $[0,\pi]$ so that
\BE{ineq:D}
D(t)\le D(\pi) = \pi/2 \qquad \mbox{on} \qquad [-\pi,\pi].
\EE

\subsection{Span}

As above, $X$ has span 1, mean 0, shift~$a$, and CF $f(t)$.

\begin{lemma}
\label{lem:span}
There are integers $c_x$, one for each value $x$ of  $X$, such that
\[
\sum_x c_x = 0, \quad \mbox{and} \quad \sum_x c_x x = 1.
\]
\end{lemma}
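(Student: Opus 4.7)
The plan is to reduce the statement to Bezout's identity applied to the pairwise differences of the values of~$X$. Fix any value $x_0$ of $X$ (which exists, and is not the only value, under the standing assumption that $X$ has at least two values). Because $X$ has span~$1$, every value lies in $a+\Z$, so the difference $x-x_0$ is an integer for every value~$x$. Moreover, the span of~$X$ equals the gcd of all pairwise differences of values, so
\[
\gcd\{x - x_0 : x \text{ a value of } X,\; x \ne x_0\} = 1.
\]

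By Bezout's identity I would then choose integers $m_x$, one for each value $x \ne x_0$, with $\sum_{x \ne x_0} m_x (x - x_0) = 1$. Setting $c_x := m_x$ for $x \ne x_0$ and $c_{x_0} := -\sum_{x \ne x_0} m_x$ makes $\sum_x c_x = 0$ automatic, and a one-line rearrangement of $\sum_x c_x x$ reduces it to $\sum_{x \ne x_0} m_x (x - x_0) = 1$, giving the second condition.

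There is no real obstacle here. The only point worth explicitly noting is the equivalence between the definition of span as ``the largest $b$ for which all values lie in one congruence class modulo~$b$'' and the ``gcd of pairwise differences'' characterization used above; this equivalence is standard and is already flagged in the paragraph preceding the lemma, so once it is invoked the whole proof is an immediate appeal to Bezout.
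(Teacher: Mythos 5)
Your proof is correct and follows essentially the same route as the paper's: fix a base value, observe that the gcd of the differences $x - x_0$ is $1$ (else the span would exceed $1$), apply Bezout to get coefficients for $x \ne x_0$, and then define $c_{x_0}$ to make the total sum vanish. The only cosmetic difference is that the paper argues by contradiction that the gcd is $1$, while you invoke the span-equals-gcd characterization already flagged in the surrounding text; these are the same observation.
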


\begin{proof}
Let $y$ be a value of~$X$.
If $b := \gcd(\{x-y\})$ is larger than 1 then the values of $X$ are contained in $y+b\Z$
which contradicts the fact that $X$ has span~1.
Therefore, the gcd is~$1$ and there are integers $c_x$, for $x \ne y$, such that
\[
\sum c_x (x-y) = 1.
\]
Set $c_y = -\sum_{x \ne y} c_x$.
The stated properties are easily verified.
\end{proof}

A set $\{c_x\}$ as in the lemma is said to be a {\em certificate} of the fact that $X$ has span~1.

\begin{lemma} 
The function $|f(t)|$ has period~$2\pi$, and $|f(t)|<1$ for $t \in (0,2\pi)$.
\end{lemma}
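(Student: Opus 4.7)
The plan is to verify the two claims separately. For the periodicity of $|f(t)|$, I would compute $f(t+2\pi)$ directly using the fact that every value of $X$ has the form $x = a + j$ with $j \in \Z$: writing $e^{2\pi i x} = e^{2\pi i a}\,e^{2\pi i j} = e^{2\pi i a}$, we get $f(t+2\pi) = e^{2\pi i a} f(t)$, whose modulus equals $|f(t)|$. This already handles the first assertion.

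For the strict bound, I would start from the triangle inequality $|f(t)| = \bigl|\sum_x p_x e^{itx}\bigr| \le \sum_x p_x = 1$, and recall the equality case: equality holds iff the complex numbers $p_x e^{itx}$ (for $x$ in the support) all have a common argument, i.e., $e^{it(x-y)} = 1$ for every pair of values $x,y$ of $X$. Equivalently, $t(x-y) \in 2\pi\Z$ for all such pairs.

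The key step is then to exploit the span hypothesis via the certificate from Lemma~\ref{lem:span}. Fix one value $y$. Because $\sum_x c_x = 0$, we have $\sum_x c_x x = \sum_x c_x(x-y) = 1$. If $|f(t)| = 1$, then for each value $x$ we can write $t(x-y) = 2\pi k_x$ for some $k_x \in \Z$. Multiplying by $c_x$ and summing gives $t = t \sum_x c_x(x-y) = 2\pi \sum_x c_x k_x \in 2\pi\Z$. Hence $|f(t)| = 1$ forces $t \in 2\pi\Z$, so $|f(t)| < 1$ for every $t \in (0,2\pi)$.

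The only genuine content here is the use of the span-$1$ certificate; everything else is triangle inequality and an immediate computation. I do not anticipate an obstacle beyond being careful about the equality case in $\sum_x p_x e^{itx}$, which is standard (all nonzero terms must be positive real multiples of a common unit complex number, and by summing to a nonnegative real they must all be positive reals, yielding $e^{itx} = e^{ity}$ for all values $x,y$).
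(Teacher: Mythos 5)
Your proof is correct, but it takes a genuinely different route from the paper's for the strict inequality. Both handle periodicity the same way (all values are of the form $a+j$ with $j\in\Z$, so $f(t+2\pi)=e^{2\pi i a}f(t)$). For $|f(t)|<1$ on $(0,2\pi)$, the paper argues indirectly: it observes that $|f|$, being a continuous $2\pi$-periodic function, must have fundamental period $2\pi/b$ for some positive integer $b$; then evaluating at $t=2\pi/b$ and invoking the equality case of the triangle inequality shows every $x-a$ is a multiple of $b$, contradicting span $1$ unless $b=1$. (Implicit there is the fact that the period set of a non-constant continuous function is a closed discrete subgroup of $\R$, together with the observation that any point where $|f|=1$ is itself a period.) You instead cut straight to the point: $|f(t)|=1$ forces $e^{it(x-y)}=1$ for all values $x,y$, i.e.\ $t(x-y)\in 2\pi\Z$, and then you apply the span-$1$ certificate $\{c_x\}$ from Lemma~\ref{lem:span} -- multiplying by $c_x$, summing, and using $\sum_x c_x(x-y)=1$ -- to conclude $t\in 2\pi\Z$. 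Your argument is more self-contained (no appeal to the structure of the period group) and makes the role of the certificate explicit, which fits well with how the certificate is used elsewhere in the paper (e.g.\ in Theorem~\ref{thm:CFbound}); the paper's version is shorter to state but leaves more for the reader to fill in. Both are valid.
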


\begin{proof}
We can assume that the shift $a$ is a value of~$X$.
Since
\[
f(t) = e^{iat} \sum_x p_x e^{it(x-a)}
\]
and the $x-a$ are all integers it follows that $|f(t+2\pi)| = |f(t)|$, and that 
the period of $|f(t)|$ is of the form $2\pi/b$ for some positive integer~$b$.  
Then
\[
1 = | f(2\pi/b) | = \left | e^{i2\pi a/b} \sum_x p_x e^{ 2\pi i (x-a)/b} \right | \le
\sum p_x = 1.
\]
Equality in this use of the triangle inequality implies that all $e^{\pi i(x-a)/b}$ are equal 
to 1, i.e., that all $x-a$ are multiples of~$b$.  If $b>1$ then this contradicts the fact
that the span of~$X$ is equal to~1.
\end{proof}

These lemmas show that if the span is~~$1$ then
$\gcd(\{x-x'\}) = 1$, $X$ has a certificate, and $|f(t)|$ has period $2\pi$.
It is not hard to show that any of these implies that the span is~$1$,
so all four conditions are equivalent.

\subsection{Bounding CF power series tails}

The power series of the CF for $X$ converges for all real~$t$, and has the form
\BE{eq:CFps}
f(t) = \sum_x \, p_x \, e^{itx} =  
\sum_{k\ge 0} \mu_k \, \frac{(it)^k}{k!} = 1 - \frac{\mu_2 t^2}{2} 
- i \, \frac{\mu_3 t^3}{6} + \frac{\mu_4 t^4}{24} + \ldots.
\EE
The tail of this power series has an especially tight bound, saying that
the remainder after $k$ terms is at most the absolute value of the next 
term with the moment replaced by the corresponding absolute moment.

\begin{lemma}
\label{lem:CkB}
Let $\overline\mu_k = \E |X|^k$ denote the $k^{\rm th}$ absolute moment of~$X$.
Then
\[
\left|\sum_{j \ge k} \frac{\mu_j (it)^j}{j!} \right | \le
\frac{\overline\mu_k |t|^k}{k!} .
\]
\end{lemma}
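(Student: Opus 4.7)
The plan is to reduce the inequality to the standard Taylor remainder bound for $e^{iu}$, then integrate over the distribution of $X$. Since $X$ is bounded, all the series involved converge absolutely and uniformly in $x$, so there is no subtlety in swapping sums.

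First, I would use the definitions $\mu_j = \sum_x p_x x^j$ and interchange the two summations, which is legal because $X$ takes only finitely many values. This gives
\[
\sum_{j \ge k} \frac{\mu_j (it)^j}{j!} \; = \; \sum_x p_x \sum_{j \ge k} \frac{(itx)^j}{j!} \; = \; \sum_x p_x \left( e^{itx} - \sum_{j=0}^{k-1} \frac{(itx)^j}{j!} \right).
\]
So the quantity to be bounded is $\E[R_k(tX)]$, where $R_k(u) := e^{iu} - \sum_{j<k}(iu)^j/j!$ is the $k$-th Taylor remainder of $e^{iu}$ at $0$.

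Second, I would establish the pointwise bound $|R_k(u)| \le |u|^k/k!$ for real $u$. This is the one nontrivial ingredient, but it follows from Taylor's theorem with integral remainder: writing
\[
R_k(u) \; = \; \frac{i^k}{(k-1)!} \int_0^u (u-s)^{k-1} e^{is} \, ds,
\]
one pulls absolute values inside, uses $|e^{is}|=1$, and evaluates $\int_0^{|u|}(|u|-s)^{k-1}\,ds = |u|^k/k!$.

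Third, combining these two steps via the triangle inequality inside the expectation yields
\[
\left| \sum_{j \ge k} \frac{\mu_j (it)^j}{j!} \right| \; \le \; \sum_x p_x \, \frac{|tx|^k}{k!} \; = \; \frac{|t|^k}{k!} \sum_x p_x |x|^k \; = \; \frac{\overline{\mu}_k |t|^k}{k!},
\]
which is the claim. The only genuinely nonroutine step is the Taylor remainder estimate for $e^{iu}$; the tightness there (one loses no factor despite $e^{iu}$ being oscillatory) comes from bounding the $k$-th derivative $i^k e^{iu}$ by $1$ in modulus, which is exactly what converts the usual real-variable Taylor bound into the advertised inequality.
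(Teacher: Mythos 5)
Your proof is correct and follows essentially the same route as the paper: both rest on the Taylor-remainder bound $\left|e^{iu} - \sum_{j<k}(iu)^j/j!\right| \le |u|^k/k!$ (the paper phrases it as $e^{it} = \sum_{j<k}(it)^j/j! + \theta(t)(it)^k/k!$ with $|\theta|\le 1$) and then average over the finite distribution of $X$. Minor slip only: $\int_0^{|u|}(|u|-s)^{k-1}\,ds = |u|^k/k$, not $|u|^k/k!$; the $1/(k-1)!$ already sitting outside the integral supplies the missing factor, so your final bound is still right.
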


\begin{proof}
The expansion
\[
e^{it} = \sum_{0 \le j < k} \, \frac{(it)^j} {j!} + \theta(t) \,  \frac{(it)^k}{k!}, \qquad |\theta(t)| \le 1,
\]
follows from several standard integral forms of the remainder in Taylor's Theorem.
Replace $t$ by $tx$, where $x$ is a value of $X$, multiply by $p_x$, and sum over
all values $x$ to arrive at
\[
f(tx)  = \sum_{j < k} \sum_x p_x \frac{(itx)^j}{j!} + \sum_{x} \theta(tx) p_x \frac{(itx)^k}{k!}  
= \sum_{j < k} \mu_j  \frac{(it)^j}{j!} + \sum_{x} \theta(tx) p_x \frac{(itx)^k}{k!}  .
\]
Taking the absolute value of the remainder gives the bound
\[
\left | \sum_{x} \theta(tx) p_x \frac{(itx)^k}{k!}  \right | \le
\sum_{x}  p_x |x|^k \frac{|t|^k}{k!}  = \overline \mu_k |t|^k/k!
\]
as claimed.
\end{proof}

\subsection{Bounding the CF}

Fix a certificate $\{c_x\}$, as above, and let
$C = \sum_x |c_x|$ be its $\ell_1$ norm.  Note that $C \ge 2$ since at
least two of the integers $c_x$ are nonzero.
Before proving a bound on the CF $f(t)$ outside a neighborhood of~0,
a preliminary lemma is needed.

\begin{lemma} If $0 < t < \pi$ then 
no interval on the circle of arc length less than $2t/C$ contains $e^{itx}$
for all values~$x$ of~$X$.
\end{lemma}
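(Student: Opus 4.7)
The plan is to exploit the certificate $\{c_x\}$ to produce a ``linear combination'' identity on the unit circle: since $\sum c_x = 0$ and $\sum c_x x = 1$, the multiplicative combination $\prod_x (e^{itx})^{c_x}$ collapses to $e^{it}$, while being unaffected by the overall rotation of the arc. This turns a narrow-arc assumption about the $e^{itx}$ into an upper bound on $t$, contradicting $0<t<\pi$ if the arc is too narrow.

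Concretely, suppose for contradiction that the points $e^{itx}$ (as $x$ ranges over values of $X$) all lie in some arc of length $\ell < 2t/C$. Pick a center $\omega = e^{i\psi}$ of that arc, and write each point as
\[
e^{itx} = \omega \, e^{i\theta_x}, \qquad |\theta_x| \le \ell/2.
\]
Raise to the power $c_x$ and multiply over all values $x$. Using $\sum_x c_x = 0$ and $\sum_x c_x x = 1$,
\[
e^{it} \;=\; \prod_x e^{it c_x x} \;=\; \omega^{\sum_x c_x} \, e^{i \sum_x c_x \theta_x} \;=\; e^{i \sum_x c_x \theta_x}.
\]
Hence $t \equiv \sum_x c_x \theta_x \pmod{2\pi}$, while
\[
\Bigl|\sum_x c_x \theta_x \Bigr| \;\le\; \sum_x |c_x|\,|\theta_x| \;\le\; \frac{C \ell}{2} \;<\; t.
\]
Since $0 < t < \pi$, both $t$ and $\sum c_x \theta_x$ lie in the open interval $(-\pi,\pi)$, so the congruence forces equality of the real numbers, contradicting the strict inequality above.

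The argument is short and I do not expect real obstacles; the one subtlety to get right is the reduction modulo $2\pi$, which requires the bound $C\ell/2 < t < \pi$ to pin both sides into a common interval of length less than $2\pi$ where the exponential map is injective. The role of $\sum c_x = 0$ is precisely to kill the rotational ambiguity $\omega$ of the arc, and the role of $\sum c_x x = 1$ is to produce $e^{it}$ on the nose. Everything else is the triangle inequality.
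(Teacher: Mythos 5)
Your proof is correct and is essentially the paper's argument, repackaged multiplicatively on the circle: the paper chooses integers $j_x$ with $u < tx + 2\pi j_x < u + 2t/C$, centers at $u + t/C$, and multiplies by $c_x$ and sums (tracking inequality reversals for negative $c_x$) to land on $-t < t + 2\pi M < t$, which is your $t \equiv \sum c_x\theta_x \pmod{2\pi}$ with $|\sum c_x\theta_x| < t$ in disguise. Both exploit $\sum c_x = 0$ to kill the arc's rotation and $\sum c_x x = 1$ to produce $t$, so there is no substantive difference.
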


\begin{proof}
Suppose, to the contrary, that all $e^{itx}$ lie in the interior of the arc
from $e^{iu}$ to $e^{i(u+2t/C)}$ on the circle.  (Since $2t/C < \pi$
the interior is well-defined --- it is the smaller of the arcs into which
those two points divide the circle.)
Then there are integers $j_x$ such that
\[
u < tx + 2\pi j_x < u + 2t/C \; .
\]
Subtract $u+t/C$ to get
\[
-t/C < tx + 2 \pi j_x - u -t/C< t/C.
\]
Multiply by $c_x$ and sum, noting that the inequalities reverse if $c_x$ is
negative, to get
\[
-t = \frac{- \sum |c_x| t}{C}  <  t + 2\pi M < \frac{\sum |c_x| t}{C} =  t
\]
for some integer~$M$.
If $M$ is nonnegative the right inequality is false, and if $M$ is negative
the left inequality is false.  This finishes the proof of the lemma.
\end{proof}

\begin{theorem}
\label{thm:CFbound}
Let $X$ be a random variable as above, and $f(t)$ its CF. 
Let $m:= \min\{p_x\}$ be the minimum probability of a value.
If $|t| \le \pi$ then
\[
\left | f(t) \right | \le 1 - \frac{8m t^2}{\pi^2 C^2}.
\]
\end{theorem}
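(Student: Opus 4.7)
The plan is to convert the preceding lemma's arc-spread estimate into a quantitative lower bound on $1-|f(t)|$. I would write $f(t)=|f(t)|\,e^{i\phi_0}$ and set $\theta_x:=tx-\phi_0$ reduced to $(-\pi,\pi]$. Extracting the real and imaginary parts of $e^{-i\phi_0}f(t)$ gives the two key identities
\[
|f(t)| \;=\; \sum_x p_x\cos\theta_x, \qquad \sum_x p_x\sin\theta_x \;=\; 0.
\]
Arc length is invariant under rotation, so by the preceding lemma the $e^{i\theta_x}$ do not fit in any arc of length less than $2t/C$; let $L\ge 2t/C$ denote the length of the smallest arc containing them.

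The heart of the argument is the case $L\le\pi$. Then the arc sits in a semicircle, and the vanishing of $\sum p_x\sin\theta_x$ forces $0$ to lie inside the arc (otherwise all the sines would share a sign). So the two arc-endpoint values $x_1,x_2$ satisfy $\theta_{x_1}\le 0\le\theta_{x_2}$ with $\theta_{x_2}-\theta_{x_1}=L$. Keeping only these two summands in $1-|f(t)|=\sum_x p_x(1-\cos\theta_x)$, bounding $p_{x_i}\ge m$, and applying sum-to-product with $s=(\theta_{x_1}+\theta_{x_2})/2$,
\[
1-|f(t)| \;\ge\; m\bigl[2-2\cos s\,\cos(L/2)\bigr] \;\ge\; 4m\sin^2(L/4),
\]
the second step using $|\cos s|\le 1$ together with $\cos(L/2)\ge 0$ (since $L/2\le\pi/2$).

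The stated constant $8$ (rather than $2$ or $4$) then comes from a mild sharpening of Jordan's inequality: the function $\sin(x)/x$ is decreasing on $(0,\pi)$, so $\sin(x)\ge (2\sqrt 2/\pi)\,x$ for $x\in(0,\pi/4]$. Applied with $x=L/4\le\pi/4$ this gives $\sin^2(L/4)\ge L^2/(2\pi^2)\ge 2t^2/(\pi^2C^2)$, and multiplying by $4m$ produces $1-|f(t)|\ge 8mt^2/(\pi^2C^2)$.

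The hard part will be the complementary case $L>\pi$, where the arc exceeds a semicircle and $\phi_0$ is no longer forced to lie in it. For $C\ge 3$ the bound is essentially free: since every sub-arc of length $\pi<L$ omits some value, there is an $x^{**}$ with $|\theta_{x^{**}}|>\pi/2$, giving $1-|f(t)|\ge m\ge 8m/C^2\ge 8mt^2/(\pi^2C^2)$. The residual case $C=2$ I would handle separately: a certificate of total $\ell^1$-norm $2$ must consist of two values $y_1,y_2$ of $X$ differing by $1$, so the $e^{ity_i}$ have angular distance exactly $t$, and the same sum-to-product argument applied to $y_1,y_2$ (with $L$ replaced by $t$) gives $1-|f(t)|\ge 4m\sin^2(t/4)\ge 2mt^2/\pi^2 = 8mt^2/(\pi^2C^2)$, independent of $L$.
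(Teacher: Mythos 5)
Your proof is correct, and it takes a genuinely different route from the paper's. The paper's proof works with $f(t)$ directly: it first asserts that the preceding lemma produces two values $x<y$ whose arc-separation lies in $[2t/C,\pi]$, then splits $f(t)=T+me^{itx}+me^{ity}$ and applies the triangle inequality, $|f(t)|\le 1-2m+2m\cos(t(y-x)/2)\le 1-4m\sin^2(t/2C)$, and the same Jordan-type bound $\sin(x)\ge (2\sqrt2/\pi)x$ on $[0,\pi/4]$. You instead rotate so that $|f(t)|=\sum_x p_x\cos\theta_x$ exactly (rather than bounding $|f|$ by a triangle inequality), introduce the minimal covering-arc length $L$, and case-split on $L\le\pi$ versus $L>\pi$, handling the latter directly (trivially for $C\ge3$, and via the certificate structure for $C=2$). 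What your version buys is that the arc-geometry is completely explicit: the paper's assertion that two values have arc-separation in $[2t/C,\pi]$ is delicate when $2t/C$ is large (the lemma only says no short arc covers everything; that does not instantly give two points far apart), whereas your $L>\pi$ branch addresses exactly that regime. One small inaccuracy: the parenthetical justification that $\sum p_x\sin\theta_x=0$ ``forces $0$ to lie inside the arc'' is not quite right, since an arc of length $\le\pi$ avoiding $0$ can still straddle $\pm\pi$, where $\sin$ changes sign; but this remark is never used — the inequality $2-2\cos s\cos(L/2)\ge 4\sin^2(L/4)$ requires only $\cos s\le1$ and $\cos(L/2)\ge0$, both of which hold regardless of where the arc sits.
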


\begin{proof}
By the preceding Lemma there are two values $x,y$ such that $x< y$ and the ``arc-length''
distance between $e^{itx}$ and $e^{ity}$ on the unit circle is at least $2t/C$ and at most ~$\pi$, i.e.,
\[
\frac{2t}{C} \le ty-tx \le \pi.
\]
Then
\[
f(t) =  \sum_u p_u e^{itu} = T + m e^{itx} + m e^{ity}, 
\]
where $T$ is a trigonometric sum with nonnegative coefficients that sum to $1-2m$, and
therefore
\begin{align*}
|f(t)| & \le 1-2m   + \left | m e^{it(x+y)/2} \left ( e^{it(x-y)/2}+e^{it(y-x)/2} \right ) \right | \\
  & = 1-2m + 2m \cos( t(y-x)/2) \\
  & \le 1-2m + 2m \cos(t/C) = 1 - 2m(1- \cos(t/C)) \\
  & = 1 -4m\sin^2(t/2C).
\end{align*}
Since $x/\sin(x)$ is increasing on $[0,\pi/4]$ it follows that
\[
\sin(t/2C) \ge \frac{t}{2C} \frac{\sin(\pi/4)}{\pi/4} = \frac{\sqrt{2}t}{\pi C}.
\]
Thus
\[
1 -4m\sin^2(t/2C) \le \frac{8mt^2}{\pi^2 C^2},
\]
finishing the proof.
\end{proof}

A similar bound $|f(t)| \le 1 -  d t^2$ can be found in \cite{Benedicks}, for
a completely different constant $d$ (not consistently better or worse than the
constant in the above theorem). 
In addition a technique is given in \cite{Benedicks} to improve the bound when
there are several independent certificates, and that idea also applies to our
bound.
Note that from (\ref{eq:CFps}) it is clear that any such constant has to
be strictly smaller than~$\mu_2/2$.
Later we will see how to, for practical purposes, find bounds that, roughly,
say that in a practical context the constant can be made as close to~$\mu_2/2$ as 
desired.

\subsection{Facts about the Gamma function}

Several facts about values of the Gamma function (and its upper and lower variants) at
integers and half-integers will be needed below.
To slightly complicate matters, these will arise here as integrals of functions of the form
of $t^{x} \exp(-ct^2/2)$.  It is convenient to collect these in one place for the sake of
future reference.
In the Proposition below
the ``double factorial'' $x!!$ of a nonnegative integer denotes the 
product of all positive integers up to~$x$ that have the same parity as~$x$, i.e.,
\[
x!! = \prod_{0 \le k < x/2} \, (x-2k).
\]

\begin{prop}
\label{prop:G}
Let $c$ and $s$ be positive real numbers, and
$k$ a positive integer.
\[
\begin{array}{lll}
(1) & \quad
\int_0^\infty t^{2k} \, e^{-ct^2/2} \, \frac{dt}{t} = c^{-k} \, (2k-2)!!, &
\int_0^\infty t^{2k-1} \, e^{-ct^2/2} \, \frac{dt}{t} = c^{-k+1} \, (2k-3)!! 
{\displaystyle \sqrt{\frac{\pi}{2c}}} \\
(2) & \quad
\int_0^s t^{2} \, e^{-ct^2/2} \, \frac{dt}{t} = {\displaystyle \frac{1}{c} \, \left(1 - e^{-cs^2/2}\right) } &
\int_0^s t^{4} \, e^{-ct^2/2} \, \frac{dt}{t} = {\displaystyle \frac{2}{c^2} } \,
\left ( 1 - e^{-cs^2/2}(1+{\displaystyle \frac{cs^2}{2}}) \right ) \\
(3) & \quad \int_s^\infty  e^{-ct^2/2} \, \frac{dt}{t} \le 
{\displaystyle \frac{e^{-cs^2/2}}{cs^2}}, &
\int_s^\infty  \,t \, e^{-ct^2/2} \, dt = {\displaystyle \frac{e^{-cs^2/2}}{cs}} \\
& \multicolumn{2}{c}{
 \int_s^\infty  t^2 e^{-ct^2/2} \, dt < 
{\displaystyle \frac{e^{-cs^2/2}}{c^2s}\,(1+cs^2).}}  \\
\end{array}
\]
\end{prop}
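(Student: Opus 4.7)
All seven formulas reduce either to standard Gamma-function evaluations or to elementary antidifferentiation via $(d/dt)\,e^{-ct^2/2} = -ct\,e^{-ct^2/2}$ together with one or two integrations by parts. The proof splits naturally into three groups, mirroring the three rows of the statement, and I would treat them in order.

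For the improper integrals in (1), I would make the substitution $u = ct^2/2$, so that $dt/t$ becomes $du/(2u)$ and $t^{2j}$ becomes $(2u/c)^j$. The left-hand integrand becomes a constant multiple of $u^{k-1}e^{-u}$, yielding $(k-1)!/c^k$ times a power of $2$, while the right-hand integrand becomes a constant multiple of $u^{k-3/2}e^{-u}$, yielding $\Gamma(k-1/2)/c^{k-1/2}$ times a power of $2$. The only slightly delicate bookkeeping is rewriting $(k-1)!\cdot 2^{k-1}$ as $(2k-2)!!$ and $\Gamma(k-1/2) = (2k-3)!!\,\sqrt{\pi}/2^{k-1}$ so that the constants match the double-factorial form of the statement; both identities for double factorials are elementary.

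For the finite-interval identities in (2), both integrals evaluate exactly. The first is immediate, since $t\,e^{-ct^2/2}$ is, up to a sign, the derivative of $c^{-1}e^{-ct^2/2}$. The second, $\int_0^s t^3 e^{-ct^2/2}\,dt$, yields to one integration by parts with $u = t^2$ and $dv = t\,e^{-ct^2/2}\,dt$: the boundary term gives $-s^2 e^{-cs^2/2}/c$ and the remaining $\int_0^s t\,e^{-ct^2/2}\,dt$ is exactly the first line, so the two combine into the stated two-term expression.

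For the tail estimates in (3), the first inequality uses the crude bound $1/t \le t/s^2$ on $[s,\infty)$, which reduces the integral to $s^{-2}\int_s^\infty t\,e^{-ct^2/2}\,dt = e^{-cs^2/2}/(cs^2)$. The identity (or companion upper bound) on the middle integral follows either from the direct antiderivative of $t\,e^{-ct^2/2}$ or from the same $1/t \le t/s^2$ substitution applied to $\int_s^\infty e^{-ct^2/2}\,dt$. For the final bound on $\int_s^\infty t^2 e^{-ct^2/2}\,dt$, integration by parts with $u = t$ and $dv = t\,e^{-ct^2/2}\,dt$ produces a boundary term $(s/c)e^{-cs^2/2}$ plus $c^{-1}\int_s^\infty e^{-ct^2/2}\,dt$, and this remaining integral is bounded again by the $1/t \le t/s^2$ trick, assembling the factor $(1+cs^2)/(c^2 s)$ in the statement. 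There is no deep obstacle anywhere in the proof; the main risk is purely clerical — keeping the constants and indices straight, especially when converting between factorials, double factorials, and $\Gamma$-values in part (1).
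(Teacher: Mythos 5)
Your proof is correct in outline, and it differs from the paper's in two places worth noting. The paper deliberately uses a single uniform device, the integration-by-parts recursion
\[
\int t^{x} e^{-ct^2/2}\,\frac{dt}{t} = \frac{-t^{x-2}e^{-ct^2/2}}{c} + \frac{x-2}{c}\int t^{x-2} e^{-ct^2/2}\,\frac{dt}{t},
\]
to dispatch all seven statements, bottoming out at the Gaussian integral for the odd case of (1). You instead handle (1) by the substitution $u = ct^2/2$ and a Gamma-function evaluation; this is perfectly valid but trades the recursion for the half-integer identity $\Gamma(k-1/2) = (2k-3)!!\,\sqrt{\pi}/2^{k-1}$. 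For the first tail estimate in (3), you replace $1/t$ by $t/s^2$ and integrate, whereas the paper does one IBP step and drops the negative remainder; both give the identical bound $e^{-cs^2/2}/(cs^2)$. The two routes buy roughly the same thing — yours is slightly quicker if the half-integer Gamma values are taken as known, the paper's is more self-contained.

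One small but real slip: you say the bound on $\int_s^\infty e^{-ct^2/2}\,dt$ — which you need both for the middle line of (3) and inside your last IBP — ``follows from the same $1/t \le t/s^2$ trick.'' It does not: writing the integrand as $t\,e^{-ct^2/2}\cdot(1/t)$ and applying $1/t \le t/s^2$ gives $\int_s^\infty e^{-ct^2/2}\,dt \le s^{-2}\int_s^\infty t^2 e^{-ct^2/2}\,dt$, which is circular. The correct version of the trick here is $1/t \le 1/s$ on $[s,\infty)$, giving $\int_s^\infty e^{-ct^2/2}\,dt \le s^{-1}\int_s^\infty t\,e^{-ct^2/2}\,dt = e^{-cs^2/2}/(cs)$; with that correction the constants assemble exactly as you say. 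Incidentally, you were right to hedge about the middle line of (3): as printed it reads $\int_s^\infty t\,e^{-ct^2/2}\,dt = e^{-cs^2/2}/(cs)$, but the left side equals $e^{-cs^2/2}/c$. The intended statement is the inequality $\int_s^\infty e^{-ct^2/2}\,dt \le e^{-cs^2/2}/(cs)$, which is what the proof of Theorem~\ref{thm:I45B} actually uses.
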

From now on, WGFF$x$ (``well-known gamma function fact $x$'') will
refer to some statement in part ($x$) of this Proposition.
The only nontrivial part in the entire Proposition is the case $k = 1$ of the second part of (1), 
which is a famous integral.  Everything else follows from elementary integration or a sufficiently
cunning application of the integration by parts formula
\[
\int \, t^{x} \, e^{-ct^2/2}\, \frac{dt}{t} = \frac{-t^{x-2}\, e^{-ct^2/2}}{c} + \frac{x-2}{c} \,
\int t^{x-2} \, e^{-ct^2/2} \, \frac{dt}{t}. 
\]
For instance, the first WGFF3 follows from
\BE{wgff3proof}
\int_s^\infty  e^{-ct^2/2} \, \frac{dt}{t} 
= \left [ \frac{- e^{-ct^2/2}}{ct^2} \right]^\infty_s - 
\frac{2}{c} \int_s^\infty \frac{e^{-ct^2/2}}{t^3} \, dt .
\EE

\section{Main Theorem}

The main technical theorem of this paper, Theorem~\ref{thm:main}, says that 
\[
\Prb(\Xn < 0 ) = \frac{1}{2} - \frac{\LL}{\sqrt{2\pi n}} + \EL(n), \qquad \LL = 
\LL(\{na\}) = \frac{1/2-\{na\}}{\sigma} - \frac{\nu_3}{6}
\]
where explicit bounds on $\EL(n)$ are given.  
As alluded to earlier, the formula for $\LL$ is ``well-known'' from Edgeworth expansions;
the point of the theorem is of course the bound on $\EL$. 
The precise statement of this will be deferred until the end of this section.
This can be used to give an analogous statement for the tilt;
our aim is for this to be good enough to use in practice to find an 
$n_0$ such that the sign of $T_n$ is constant for $n \ge n_0$.

The following subsections (a) introduce a ``scale-invariant'' version of the 
earlier notation and results, (b) outline the steps of the proof of the theorem, 
(c) methodically work through those steps, and then (d) finally give a full statement 
of the theorem.

\subsection{Scale-Invariance}

It is convenient to modify the notation slightly
by introducing {\em scale-invariant} quantities where possible.
Note that $T_n$ is scale-invariant: it is unchanged if $X$ is replaced by a multiple of~$X$.
We introduce scale-invariant versions of
$\mu_k$, $\alpha$, $f(t)$, and~$D(t)$  
by advancing alphabetically:
\[
\begin{array}{l@{\qquad}l}
 \nu_k  := \mu_k/\sigma^k & g(t) := f(t/\sigma) = \sum \nu_k(it)^k/k! \\
 E(t)  := D(t/\sigma) = (t/(2\sigma))/ \sin(t/2\sigma) & \beta := \alpha/\sigma = (1/2-\{na\})/\sigma\\
\end{array}
\]
\noindent The key results of the preceding section using this notation are:
\begin{itemize}
\item The value of the cumulative distribution function of $X[n]$ at 0, in terms of an integral,
Theorem~\ref{thm:TnCnt}, becomes 
\BE{Prb:integral}
\Prb(\Xn < 0)  = 1/2- \frac{1}{2\pi i } \, 
\int_{-\pi\sigma}^{\pi\sigma} \, e^{i \beta t} \, g(t)^n\, E(t) \, \frac{dt}{t} .
\EE
\item
The bound on the tail of the power series of the CF, Lemma~\ref{lem:CkB}, becomes
\BE{tailgbnd}
\left|\sum_{j \ge k} \frac{\nu_j (it)^j}{j!} \right | \le
\frac{\overline\nu_k t^k}{k!} .
\EE
\item
Finally, the bound on the CF, Theorem~\ref{thm:CFbound}, becomes (introducing a factor of 
2 with an eye to the earlier gamma function facts)
\BE{thm:CFgbnd}
|g(t)| \le 1-\frac{r t^2}{2}, \quad \mbox{where} \; r = \frac{16m}{(\pi C \sigma)^2}, \qquad 
\mbox{for} \; |t| \le \pi \sigma.
\EE
\end{itemize}
Since the power series for $g(t)$ starts out with $1 - t^2/2$, the constant $r$
measures how ``tractable'' $X$ is; if $r$ is small then the tail integral estimate
will be weak, and the closer $r$ is to~$1$ the better the estimate will be.

\subsection{Proof Outline}

Throughout the proof, $X$ is a bounded lattice random variable with 
mean~0, span~1, shift~$a$, and scale-invariant CF function $g(t) = \E e^{itX/\sigma}$.   
The standard deviation is $\sigma = \sqrt{\mu_2}$, 
and scale-invariant central moments are $\nu_k = \mu_k/\sigma^k$.
Moreover, when $n$ is given,
$\beta_n = \beta  = (1/2 - \{na\})/\sigma$.
% As earlier, $C$ is the $L_1$ norm of a certificate $\{c_x\}$ for $X$ (i.e., a
% guarantor that~$X$ has span~1), and $m = \min(p_x)$ is
% the minimum probability of the values of~$X$ that are in the support of the certificate.

Fix a positive integer~$n$ and let $s$ be a positive real number~$s$.
During the proof various upper bounds will be placed on~$s$, and it will be assumed
throughout that they hold.  The
bounds on the various approximation errors are in practice smallest
when $s$ is as large as possible.  As will be seen, $s$ will in practice be
a constant (that depends on~$X$) times $n^{-1/4}$.

For the sake of a (reasonably) simple statement of error bounds, no attempt will be
made to optimize various constants that arise.  This seems appropriate in the 
motivating example of determining the sign of the tilt by the fact that nowadays a computer 
can calculate $T_n$ for large $n$.  Thus the point of the estimates is to enable a proof 
of when the asymptotic tilt has arrived so that, when combined with computation, the sign is 
known for all~$T_n$.  Thus finding the absolute best possible 
$n_0$ may not be important.

The technique for proving Theorem~\ref{thm:main} is as follows.
From (\ref{Prb:integral}) above we know that $\Prb(X[n]<0) = 1/2-I_0$, where
\[
I_0  :=  \frac{1}{2\pi i } \, \int_{-\pi\sigma}^{\pi\sigma} \, e^{i \beta t} \, g(t)^n\, 
E(t) \, \frac{dt}{t} .
\]
This integral will be approximated by defining a sequence of further integrals
$I_k$, $1 \le k \le 5$.
Each $I_{k} $ will be a reasonable approximation to $I_{k-1}$, and 
the error between them will be explicitly bounded.  The last integral $I_5$ can
be evaluated directly, and is equal to $\LL/\sqrt{2\pi n}$.
The difference between $I_0$ and $I_5$ is of course bounded by the sum of bounds
on the differences between consecutive integrals, leading to a bound on $\EL$.

The core idea of these approximations is that the dominant contribution to $I_0$
should come from a small interval around~$0$ whose size depends on~$n$.

The {\em modus operandi} of the proof here is then summarized by the following 
sequence of approximations; the subscript on each approximation gives the number 
of the subsection in which the corresponding error is bounded
\begin{alignat*}{3}
I_0 & =  \frac{1}{2\pi i } \, \int_{-\pi\sigma}^{\pi\sigma} \, e^{i \beta t} \, g(t)^n\, 
E(t) \, \frac{dt}{t}   &\;& \\
& \simeq_3 \frac{1}{2\pi i } \, \int_{-s}^{s} \, e^{i \beta t} \, g(t)^n\, 
E(t) \, \frac{dt}{t} &\;& \mbox{tail integral bound}    \\
& \simeq_4 \frac{1}{2\pi i } \, \int_{-s}^{s} \, e^{iu} \,
e^{-n t^2/2} \, E(t) \, \frac{dt}{t},   &\;&
u = \beta t - \frac{n \nu_3 t^3}{6} \\
& = \frac{1}{2\pi } \, \int_{-s}^{s} \, \sin(u) \,
e^{-n t^2/2} \, E(t) \, \frac{dt}{t},   &\;&  \frac{\cos(u)}{t} \; \mbox{is odd} \\
& \simeq_5 \frac{1}{2\pi } \, \int_{-s}^{s} \, \sin(u) \,
e^{-n t^2/2} \, \frac{dt}{t},   &\;& E(t) \simeq 1 \\
& \simeq_6 \frac{1}{2\pi } \, \int_{-s}^{s} \, u \,
e^{-n t^2/2} \, \frac{dt}{t},   &\;& \sin(u) \simeq u \\
& \simeq_7 \frac{1}{2\pi } \, \int_{-\infty}^{\infty} \, (\beta  - n \nu_3 t^2/6) \, 
e^{-n t^2/2} \, dt,   &\;& \mbox{another tail bound.}\\
\end{alignat*}

\subsection{Bounding the tail integral}

For large~$n$ the dominant contribution to the integral
\[
I_0 = \frac{1}{2\pi i } \, \int_{-\pi\sigma}^{\pi \sigma} \, e^{i \beta t} \, g(t)^n\, 
E(t) \, \frac{dt}{t}  ,
\]
should come from a small neighborhood of the origin.  In fact the approximation
\[
g(t)^n \simeq (1-t^2/2)^n \simeq e^{-nt^2/2}
\]
suggests that the width of the neighborhood might be on the order of $1/\sqrt{n}$.
Let $s \le \pi \sigma$ be an arbitrary positive real number and define
\[
I_1 = \frac{1}{2\pi i } \, \int_{-s}^{s} \, e^{i \beta t} \, g(t)^n\, 
E(t) \, \frac{dt}{t}  .
\]
The parameter $s$ will be chosen later, and its optimal value will actually turn out to be
$O(n^{-1/4}).$

Recall from (\ref{thm:CFgbnd}) that $|g(t)| \le 1-rt^2/2$ for $t$ on the interval of 
integration, where the constant $r $ was defined above to be $r = 16m/(\pi C \sigma)^2$,
$m$ is the smallest probability of a point on the support of some certificate, and $C$ is
the $L_1$-norm of that certificate.

\begin{theorem}
\label{thm:I01B}
\[
\left | I_0-I_1 \right | \le \frac{\exp(-nrs^2/2)}{2nrs^2}.
\]
\end{theorem}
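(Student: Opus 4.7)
The plan is straightforward: since $I_1$ restricts the interval of integration from $[-\pi\sigma,\pi\sigma]$ down to $[-s,s]$, the difference $I_0-I_1$ is simply the integral of the same integrand over the complement $s \le |t| \le \pi\sigma$, and the task is to bound this tail contribution by standard pointwise estimates.

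First I would bound the integrand pointwise on the tail region. The exponential factor $e^{i\beta t}$ has modulus $1$; the factor $E(t) = D(t/\sigma)$ is bounded by $\pi/2$ by inequality (\ref{ineq:D}), since $|t|\le\pi\sigma$ means $|t/\sigma|\le\pi$; and the characteristic-function bound (\ref{thm:CFgbnd}) gives $|g(t)| \le 1 - rt^2/2$. A quick check shows that $rt^2/2 \le 8m/C^2 \le 1$ throughout $|t|\le\pi\sigma$ (using $m \le 1/2$ and $C\ge 2$), so the right side is in $[0,1]$ and the standard inequality $1-x \le e^{-x}$ upgrades this to $|g(t)|^n \le e^{-nrt^2/2}$.

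Combining these pointwise bounds and using the symmetry of the integrand's modulus gives
\[
|I_0-I_1| \;\le\; \frac{1}{2\pi}\cdot 2\int_s^{\pi\sigma} e^{-nrt^2/2}\cdot\frac{\pi}{2}\cdot\frac{dt}{t}
\;=\; \frac{1}{2}\int_s^{\pi\sigma} e^{-nrt^2/2}\,\frac{dt}{t}
\;\le\; \frac{1}{2}\int_s^{\infty} e^{-nrt^2/2}\,\frac{dt}{t}.
\]
Finally, applying the first WGFF3 with $c=nr$ yields $\int_s^\infty e^{-nrt^2/2}\,dt/t \le e^{-nrs^2/2}/(nrs^2)$, giving the claimed bound $e^{-nrs^2/2}/(2nrs^2)$.

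There is no real obstacle here; every ingredient (the CF bound, the bound on $D$, and the Gamma-function estimate) has already been assembled in previous subsections, and the proof is essentially bookkeeping. The only subtle point worth mentioning in the write-up is the verification that $1-rt^2/2$ remains nonnegative on the full integration range, which is what lets one raise the bound to the $n$-th power and apply $1-x\le e^{-x}$ cleanly.
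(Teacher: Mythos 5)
Your proof is correct and follows essentially the same route as the paper's: bound $|e^{i\beta t}|$ by $1$, $|E(t)|$ by $\pi/2$, $|g(t)|^n$ by $e^{-nrt^2/2}$, double the one-sided tail, and close with the first WGFF3. Your explicit check that $1-rt^2/2 \ge 0$ on $|t|\le\pi\sigma$ (via $8m/C^2 \le 1$) is a small but legitimate extra detail that the paper leaves implicit.
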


\begin{proof}
Use $|g(t)| \le 1-rt^2/2$ and $1-x \le e^{-x}$ to get
\[
|g(t)|^n \le (1-rt^2/2)^n \le \exp(-nrt^2/2).
\]
The difference $I_0 - I_1$ is the sum of a right tail and a left tail integral that
are bounded in exactly the same way.  So it suffices to multiply the bound on the upper tail by 2,
Recall the upper bound $|E(t)| \le \pi/2$, and use the bound on $g(t)^n$:
\begin{align*}
\left| I_1-I_0\right|  & \le  2 \left |\, \frac{1}{2\pi i} \, \int_{s}^{\pi\sigma} \, 
e^{i \beta t} \, g(t)^n\, E(t) \, \frac{dt}{t} \right | \\
& \le 2 \cdot \frac{1}{2\pi} \cdot \frac{\pi}{2}  \; \int_{s}^\infty \, e^{-nrt^2/2} \, \frac{dt}{t} 
\le \frac{e^{-nrs^2/2}}{2 nrs ^2}
\end{align*}
where the last inequality is WGFF3.
\end{proof}

\subsection{Bounding the power series tail}

In order to analyze the $g(t)^n$ term in the integrand it is convenient to introduce
notational shorthand for terms and tails of the power series of $g(t)$:
\[
g_j := \frac{\nu_j(it)^j}{j!}, \quad G_k := \sum_{j \ge k} g_j.
\]
Although this lighter notation is pleasant it is important to remember
that $g_k$ and $G_k$ depend on~$t$.
Note that (\ref{tailgbnd}) above says that $|G_k| \le |g_k| $ for even~$k$, and
$|G_k| \le \overline\nu_k |t|^k/k!$ for odd~$k$.

The critical term in the integrand of $I_1$ is $g(t)^n$, and the purpose of
this section is to bound the error incurred in the approximations
\[
g(t)^n = \exp(n \log(1+G_2)) \simeq \exp(nG_2) \simeq \exp(n(g_2+g_3)).
\]
It is convenient to introduce further notation.  Let
\[
q_1 := \frac{1}{5} + \frac{\nu_4}{24}.
\]
Motivated by replacing $g(t) = \exp(\log(1+G_2))$ by $\exp(g_2 + g_3)$ ,
define a ``remainder''~$R$ by
\[
R = \log(1+G_2)-g_2-g_3.
\]
This should be small if $n$ is large.

The following obvious bound (OB) on tails of power series with positive
coefficients will be used three times below;
the proof is embedded in the statement of the lemma (!).

\begin{lemma} (OB)
Let $P(x) = p_k x^k + p_{k+1} x^{k+1} + \ldots$ be a power series
power with nonnegative coefficients $p_j$, and suppose that $P(y)$ converges for 
some positive real number~$y$.  Then if $|x| \le y$,
\BE{OB}
|P(x)-p_kx^k| = |x|^{k+1} \cdot \left | \frac{P(x)-p_kx^k}{x^{k+1}} \right| 
\le |x|^{k+1} \left | \frac{P(y)-p_k y^k}{y^{k+1}}\right|.
\EE \end{lemma}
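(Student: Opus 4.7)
The equality in the displayed inequality is purely formal: it is just multiplying and dividing by $x^{k+1}$, valid for $x\neq 0$ (the $x=0$ case is trivial since both sides vanish). So the real content is the inequality bounding $|(P(x)-p_kx^k)/x^{k+1}|$ by $(P(y)-p_ky^k)/y^{k+1}$.

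My plan is to introduce the auxiliary power series
\[
h(x) := \frac{P(x)-p_kx^k}{x^{k+1}} = \sum_{j\ge 0} p_{k+1+j}\,x^{j},
\]
which is obtained by deleting the leading term of $P$ and dividing by $x^{k+1}$. By hypothesis $P(y)$ converges with $y>0$, so $h(x)$ converges absolutely on $|x|\le y$ (its radius of convergence is at least that of $P$, since stripping finitely many leading terms and dividing by $x^{k+1}$ does not shrink the disk of convergence).

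The key observation is that all coefficients of $h$ are nonnegative. Therefore, by the triangle inequality applied termwise and then monotonicity in the nonnegative argument, for $|x|\le y$,
\[
|h(x)| \;\le\; \sum_{j\ge 0} p_{k+1+j}\,|x|^{j} \;\le\; \sum_{j\ge 0} p_{k+1+j}\,y^{j} \;=\; h(y) \;=\; \frac{P(y)-p_ky^k}{y^{k+1}}.
\]
Multiplying through by $|x|^{k+1}$ gives the stated bound. There is no real obstacle here; the only thing to check carefully is the convergence justification for pulling the absolute value inside the sum, which is immediate once one notes that $\sum p_{k+1+j} y^j$ converges (it equals $(P(y)-p_ky^k)/y^{k+1}$, a finite number). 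The nonnegativity of the coefficients is what allows both the triangle inequality and the monotonicity step to be tight in the combined estimate.
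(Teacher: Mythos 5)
Your proof is correct and is essentially the paper's own argument, just written out: the paper leaves the reasoning implicit in the displayed equality/inequality, while you make explicit the auxiliary series $h(x)=\sum_{j\ge 0}p_{k+1+j}x^j$, the triangle inequality, and the monotonicity in $|x|$ coming from nonnegative coefficients. No gap.
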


\begin{theorem} 
Let $s$ be a positive real number and assume throughout
that $|t|\le s$.  If $s \le 1$ then $|G_2| \le 1/2$, and the
power series for $\log(g(t)) = \log(1+G_2)$ converges.
Moreover, 
\[
|R| \le q_1 t^4, \quad \mbox{where } \;  q_1 := \frac{1}{5}+\frac{\nu_4}{24}, \quad R = \log(1+G_2)-g_2-g_3.
\]
If also $s \le (q_1 n)^{-1/4}$, i.e., $nq_1s^4 \le 1$, then 
\[
\left | e^{nR}-1 \right | \le n p_0 |R| \le n p_0 q_1 t^4, \qquad \mbox{where } \;
p_0 := e-1 \simeq 1.71828.
\]
\end{theorem}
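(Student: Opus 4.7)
The plan is to decompose $R$ into two conceptually distinct pieces: a \emph{truncation error} $G_4 = G_2 - g_2 - g_3$, arising from cutting off the power series of $g$ at degree $3$, and a \emph{log-expansion error} $h(G_2) := \log(1+G_2) - G_2$, arising from the gap between $\log(1+x)$ and its leading Taylor term. Since $R = h(G_2) + G_4$, the two pieces can be bounded separately. Part~1 then drops out immediately: $\mu_2 = \sigma^2$ gives $\nu_2 = 1$, and because $\overline\nu_k = \nu_k$ for even $k$, the tail bound (\ref{tailgbnd}) at $k=2$ yields $|G_2| \le t^2/2 \le s^2/2 \le 1/2$, comfortably inside the disk of convergence of $\log(1+z)$.

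For part~2, I will bound $|G_4| \le \nu_4 t^4/24$ directly from (\ref{tailgbnd}) at $k=4$. For $|h(G_2)|$ I will pass to the majorant
\[
H(y) := \sum_{k \ge 2} \frac{y^k}{k} = -\log(1-y) - y,
\]
which has nonnegative coefficients, so $|h(x)| \le H(|x|)$. Applying OB to $H$ with leading exponent $2$ and $y = 1/2$ (legitimate since $|G_2| \le 1/2$) gives $|h(G_2)| \le |G_2|^2 \cdot H(1/2)/(1/2)^2 = 4(\log 2 - 1/2)|G_2|^2$. Using the numerical fact $\log 2 < 7/10$ yields $4\log 2 - 2 < 4/5$, and combined with $|G_2| \le t^2/2$ this produces $|h(G_2)| \le t^4/5$. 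Summing the two contributions gives $|R| \le t^4/5 + \nu_4 t^4/24 = q_1 t^4$.

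For part~3, I will invoke the standard estimate $|e^z - 1| \le (e-1)|z| = p_0|z|$ valid for $|z| \le 1$, which follows from $|(e^z-1)/z| \le \sum_{k \ge 1} |z|^{k-1}/k! \le \sum_{k \ge 1} 1/k! = p_0$ when $|z|\le 1$. The hypothesis $nq_1 s^4 \le 1$ together with the part~2 bound $|R| \le q_1 t^4 \le q_1 s^4$ forces $|nR| \le 1$, so applying the estimate to $z = nR$ yields $|e^{nR}-1| \le p_0|nR| = n p_0 |R| \le n p_0 q_1 t^4$.

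The main delicate point is securing the constant $1/5$ in the $h(G_2)$ bound. A cruder geometric-series estimate such as $H(x) \le x^2/(2(1-x))$ gives only $t^4/4$, which would force $q_1 = 1/4 + \nu_4/24$ and miss the stated claim. Exploiting the \emph{exact} value $H(1/2) = \log 2 - 1/2$ (rather than a generic tail bound at the endpoint) together with the numerical inequality $\log 2 < 7/10$ is what buys the tighter constant; everything else is routine.
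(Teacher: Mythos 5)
Your proof is correct and matches the paper's argument essentially step for step: the same decomposition $R = (\log(1+G_2)-G_2) + G_4$, the same use of the nonnegative-coefficient majorant $-\log(1-y)-y$ evaluated at $y=1/2$ (this is precisely the paper's OB applied to $-\log(1-x)$ with $k=1$), the same numerical fact $\log 2 - 1/2 < 1/5$, and the same $|e^z-1|\le(e-1)|z|$ for $|z|\le 1$ (the paper gets it from OB applied to $e^x$ with $k=0$, $y=1$). The only cosmetic difference is that you phrase the log bound via the auxiliary function $H$ rather than directly invoking OB with $x=-G_2$, but the content and constants are identical.
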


\begin{proof}
Part 1 follows from $|G_2| \le g_2 = t^2/2 \le s^2/2 \le 1/2$ and the fact that
the logarithm series 
\[
\log(1+G_2) = G_2 - G_2^2/2 + G_2^3/3 - \ldots
\]
converges by comparison with a geometric series of ratio $1/2$.

For the second part, first note that
\[
|R| \le |\log(1+G_2)-G_2| + | G_2-g_2-g_3|.
\]
The second term is just $|G_4| \le g_4 = \nu_4 t^4/24$.  The first term can be bounded
by applying the OB to $P(x) = -\log(1-x) = \sum x^k/k$ 
with $k = 1$, $x = -G_2$ and $y = 1/2$.  Note that $|x| = |G_2| \le t^2/2 \le s^2/2 \le 1/2$.
The OB gives 
\begin{align*}
|-\log(1+G_2)+G_2| \le G_2^2 \left( \frac{-\log(1-1/2)-1/2}{(1/2)^2} \right ) 
= \frac{t^4}{4}\left(\frac{\log(2)-1/2}{1/4} \right ) 
 \le \frac{t^4}{5}.
\end{align*}
(By choosing an even smaller bound on~$s$, this could be made as close to $t^4/8$ as desired,
but no lower.)
All in all this gives
\[
|\log(1+G_2)-g_2-g_3| \le q_1 t^4, \quad q_1 = \frac{1}{5} + \frac{\nu_4}{24}
\]
as desired.

For the third part, note that the assumed bound on $s$ implies that
\[
| nR | \le n q_1 t^4 \le n q_1 s^4 \le 1.
\]
Apply OB to $P(x) = e^x$ with $k = 0$, $x = nR$ and $y = 1$ to get
\[
\left| e^{nR}-1 \right | = |nR| \; \left(\frac{e^1-1}{1}\right) \le np_0q_1 t^4, \quad p_0 := e-1
\]
as desired.
\end{proof}

Before using this lemma for the central goal of this section --- bounding the difference between
$I_1$ and a soon-to-be defined $I_2$ --- we prove a corollary that will be used later to improve
the tail integral bound in the previous subsection.

\begin{cor}
\label{cor:tail}
With the above notation, if $s \le 1$ then
\[
\int_s^1 | g(t)|^n \frac{dt}{t} \le e^{-ns^2/2} \; \left ( \frac{1}{ns^2} + \frac{2 p_0 q_1}{n} + p_0q_1s^2 \right ).
\]
\end{cor}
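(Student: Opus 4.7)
The plan is to bound $|g(t)|^n$ pointwise on $[s,1]$ by an explicit Gaussian-like expression, and then integrate the two resulting pieces using standard gamma-function identities. Starting from $\log g(t) = g_2 + g_3 + R$ (the decomposition introduced in the preceding theorem), observe that $g_2 = -t^2/2$ is real while $g_3 = -i\nu_3 t^3/6$ is purely imaginary, so
\[
|g(t)|^n \;=\; \bigl|e^{n(g_2+g_3+R)}\bigr| \;=\; e^{-nt^2/2}\,|e^{nR}|.
\]

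Next I would invoke the previous theorem pointwise. The bound $|e^{nR}-1|\le np_0 q_1 t^4$ comes from applying the OB lemma to $e^x$ at $x = nR$, and is valid whenever $|nR|\le 1$; since $|R|\le q_1 t^4$, this holds as soon as $nq_1 t^4 \le 1$. Under this hypothesis (implicit in the regime where the corollary will be used, e.g.\ $nq_1\le 1$ so that the condition holds throughout $[s,1]$), we obtain the majorization $|g(t)|^n \le e^{-nt^2/2}\bigl(1+np_0q_1 t^4\bigr)$. Integrating from $s$ to $1$ and enlarging the upper limit to $\infty$ splits the bound into two tractable pieces:
\[
\int_s^1 |g(t)|^n\,\frac{dt}{t} \;\le\; \int_s^\infty e^{-nt^2/2}\,\frac{dt}{t} \;+\; np_0 q_1 \int_s^\infty t^3\, e^{-nt^2/2}\,dt.
\]

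The first integral is bounded by $e^{-ns^2/2}/(ns^2)$ directly by WGFF3. For the second, integrate by parts with $u=t^2$ and $dv = t\,e^{-nt^2/2}\,dt$, so $du = 2t\,dt$ and $v = -e^{-nt^2/2}/n$; the boundary term contributes $s^2 e^{-ns^2/2}/n$, and the remaining integral, using the antiderivative of $t e^{-nt^2/2}$, contributes $2 e^{-ns^2/2}/n^2$. Multiplying by $np_0q_1$ gives $p_0q_1(s^2 + 2/n)\,e^{-ns^2/2}$, and summing the two pieces factors out $e^{-ns^2/2}$ to yield exactly the stated bound.

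The main obstacle is justifying the pointwise majorization $|e^{nR}|\le 1+np_0q_1 t^4$: the OB argument from the preceding theorem genuinely requires $|nR|\le 1$, and in the worst case $t\to 1$ this forces $nq_1\le 1$. One would either carry this implicit hypothesis throughout, or else case-split on $t$ and handle the region where $nq_1 t^4 > 1$ separately using the cruder bound $|g(t)|\le 1 - rt^2/2$ of Theorem~\ref{thm:CFbound}, whose exponential decay easily absorbs the missing terms. Once this single technicality is pinned down, the remainder of the proof is routine calculus.
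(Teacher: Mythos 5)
Your argument is essentially the paper's: same decomposition $\log g = g_2 + g_3 + R$, same pointwise majorization $|g(t)|^n \le e^{-nt^2/2}(1 + np_0q_1t^4)$, and the same two-piece integration, with WGFF3 for the first piece. For the second piece you integrate by parts while the paper takes a difference of two instances of WGFF2; these are equivalent computations and both yield $p_0q_1s^2 + 2p_0q_1/n$ after multiplying by $np_0q_1$, so your arithmetic is correct. The technicality you flag is real: the bound $|e^{nR}-1| \le np_0q_1t^4$ was proved in the preceding theorem only under the standing assumption $|t| \le s$ together with $nq_1s^4 \le 1$, so applying it pointwise on $[s,1]$ silently requires $nq_1 \le 1$, which is often false (indeed $q_1 \ge 1/5 + 1/24$ always, so $nq_1 > 1$ already for $n \ge 5$). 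The paper's own proof invokes the theorem in exactly the same out-of-range way without comment, so you have identified an unaddressed gap in the source rather than introduced one. Your second proposed repair — case-splitting at $t = (q_1n)^{-1/4}$ and using $|g(t)| \le 1 - rt^2/2$ on the outer region — is the natural one, but note that it would not reproduce the stated bound verbatim (the outer piece contributes an $e^{-nr t^2/2}$-type term whose leading factor involves $r$ rather than $s$), so either the corollary's conclusion should be restated or a sharper pointwise estimate on $\mathrm{Re}(R)$ for $t \in [s,1]$ is needed.
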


\begin{proof}
The last part of the theorem says that
\[
|g(t)|^n = \left| e^{ng_2+ng_3} \left (1 + ( e^{nR}-1) \right ) \right| 
\le e^{-nt^2/2} \left ( 1 + \theta n p_0 q_1 t^4 \right )
\]
so that
\[
\int_s^1 | g(t)|^n \frac{dt}{t} \le 
\int_s^1 e^{-nt^2/2}  \frac{dt}{t} +
np_0q_1 \; \int_s^1  t^4 e^{-nt^2/2} \frac{dt}{t}.
\]
The first of the integrals on the right hand side can be estimated by (extending the interval to infinity and) using the
first WGFF3, and the second can be evaluated exactly by taking the difference of two instances of the second WGFF2.
The result is
\[
\int_s^1 | g(t)|^n \frac{dt}{t} \le  
\left ( \frac{e^{-ns^2/2}}{ns^2} + \frac{ 2 n p_0 q_1 \; e^{-ns^2/2}}{n^2} \left ( 1 + \frac{ns^2}{2} \right) \right)
\]
which simplifies to the expression in the corollary.
\end{proof}

Returning to the problem of approximating $I_1$, note that
the first two factors in the integrand can be written
\begin{align*}
e^{i\beta t} g(t)^n & = \exp(i\beta t + n \log(1+G_2) ) \\
    & = \exp(i \beta t + ng_2 + ng_3 + nR) \\
    &  = \exp(i u) \exp(-n t^2/2) \exp(nR)
\end{align*}
where the quantity
\[
u = \beta t - n \nu_3 t^3/6
\]
captures the key imaginary terms.
This motivates the definition of the next integral:
\[
I_2 := \frac{1}{2\pi i } \, \int_{-s}^{s} \, e^{i u} \, e^{-nt^2/2}
E(t) \, \frac{dt}{t}.
\]

\begin{theorem} \label{thm:I12B}
If $s \le \min(1,(q_1n)^{-1/4})$ (so that previous theorem holds) then
\[
\left| I_1 - I_2 \right | \le  \frac{p_0 q_1 }{ 2n}, 
\]
\end{theorem}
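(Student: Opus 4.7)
The plan is to exploit the factorization coming directly from the preceding theorem: since $n\log(1+G_2) = ng_2 + ng_3 + nR$ and $i\beta t + ng_2 + ng_3 = iu - nt^2/2$, the integrand of $I_1$ equals that of $I_2$ multiplied by $e^{nR}$. That is,
\[
e^{i\beta t}\,g(t)^n \;=\; e^{iu}\,e^{-nt^2/2}\,e^{nR}\qquad\text{on }[-s,s].
\]
Subtracting $I_2$ from $I_1$ immediately gives
\[
I_1 - I_2 \;=\; \frac{1}{2\pi i}\int_{-s}^{s} e^{iu}\,e^{-nt^2/2}\,E(t)\,\bigl(e^{nR}-1\bigr)\,\frac{dt}{t}.
\]
So the whole task reduces to bounding this single integral.

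Next, I would pass to absolute values inside the integral. The four factors admit clean \emph{pointwise} bounds that have already been assembled earlier: $|e^{iu}|=1$; the real Gaussian satisfies $|e^{-nt^2/2}|\le 1$; by (\ref{ineq:D}) the even factor $E(t)=D(t/\sigma)$ obeys $|E(t)|\le \pi/2$; and, crucially, the hypothesis $s\le \min(1,(q_1n)^{-1/4})$ is exactly what the previous theorem requires to conclude $|e^{nR}-1|\le np_0 q_1 t^4$. Combining these yields
\[
|I_1-I_2|\;\le\;\frac{1}{2\pi}\cdot\frac{\pi}{2}\cdot np_0q_1 \int_{-s}^{s} t^{4}\,e^{-nt^2/2}\,\frac{dt}{|t|}
\;=\;\frac{np_0q_1}{2}\int_{0}^{s} t^{3}\,e^{-nt^2/2}\,dt.
\]
The factor $1/|t|$ is harmless here precisely because $|e^{nR}-1|$ contributes a $t^4$, leaving an odd integrand $|t|^3$ with no singularity at $0$; this is what makes the fourth-order power-series truncation (rather than second or third) the natural place to stop.

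The last step is to finish off the integral by extending to $(-\infty,\infty)$ and applying the relevant well-known gamma function fact, namely WGFF1 with $k=2$, which gives $\int_0^\infty t^{3}e^{-nt^2/2}\,dt = \int_0^\infty t^{4}e^{-nt^2/2}\,dt/t = 2/n^{2}$. Substituting produces the claimed bound of order $p_0q_1/n$. The main obstacle I anticipate is bookkeeping the absolute constant: the straightforward calculation above naturally delivers $p_0q_1/n$, and squeezing out the extra factor of $1/2$ to reach $p_0q_1/(2n)$ likely requires exploiting the fact, noted in the proof outline (``$\cos(u)/t$ is odd''), that only the imaginary part of the integrand survives. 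Using $\int_{-s}^{s}h(t)\,dt = 2i\int_0^s \mathrm{Im}(h(t))\,dt$ replaces the factor $1/(2\pi i)$ by $1/\pi$ in front of the surviving half-integral over $[0,s]$, and it is through this symmetry reduction that the stated constant should be attained.
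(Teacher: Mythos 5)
Your approach and calculations mirror the paper's proof exactly: factor $e^{i\beta t}g(t)^n = e^{iu}e^{-nt^2/2}e^{nR}$, bound the integrand pointwise via $|e^{iu}|=1$, $E(t)\le\pi/2$, and $|e^{nR}-1|\le np_0q_1t^4$ from the preceding theorem, then integrate. Your evaluation $\int_0^\infty t^3 e^{-nt^2/2}\,dt=2/n^2$ (WGFF1, $k=2$) is correct, and the chain
\[
|I_1-I_2|\le \frac{1}{2\pi}\cdot\frac{\pi}{2}\cdot np_0q_1\cdot 2\int_0^s t^3 e^{-nt^2/2}\,dt \le \frac{np_0q_1}{2}\cdot\frac{2}{n^2}=\frac{p_0q_1}{n}
\]
genuinely yields $p_0q_1/n$, as you found.

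You should take your own answer seriously: $p_0q_1/n$ is the correct output of this argument, and the theorem's stated $p_0q_1/(2n)$ appears to be a slip in the paper. The published proof misquotes WGFF2, asserting $\int_0^s t^4 e^{-nt^2/2}\,dt/t = \tfrac{1}{n^2}\bigl(1-e^{-ns^2/2}(1+ns^2/2)\bigr)$, whereas Proposition~\ref{prop:G} gives $\tfrac{2}{n^2}\bigl(\cdots\bigr)$; restoring the dropped factor of $2$ produces $p_0q_1/n$, exactly your bound. Consequently your proposed rescue via the imaginary part is a dead end, and it is worth seeing why. The symmetry $h(-t)=-\overline{h(t)}$ does give $\tfrac{1}{2\pi i}\int_{-s}^s h\,dt = \tfrac{1}{\pi}\int_0^s\operatorname{Im}h\,dt$, but applying $|\operatorname{Im}h|\le|h|$ returns $\tfrac{1}{\pi}\int_0^s|h|\,dt$, which equals $\tfrac{1}{2\pi}\int_{-s}^s|h|\,dt$ --- the same bound as before. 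The factor of $2$ gained by folding onto $[0,s]$ is cancelled by the replacement of $\tfrac{1}{2\pi}$ with $\tfrac{1}{\pi}$, so nothing is saved. The ``$\cos(u)/t$ is odd'' remark in the outline is used at a later, different stage (rewriting $I_3$ as a $\sin$-integral, an exact identity preceding the $I_3\simeq I_4$ step), not as a device for sharpening a constant inside a triangle-inequality bound.

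Two small observations: the stray line ``$|e^{-nt^2/2}|\le 1$'' is true but unused and should not be used --- keeping the Gaussian inside the integral is essential for the $O(1/n)$ rate; and extending the integral to $(0,\infty)$ as you do is a cleaner route to the bound than the paper's exact WGFF2 evaluation followed by the observation that the parenthesized factor is at most $1$, so you lose nothing there.
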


\begin{proof}
From everything above (e.g., as in the proof of Theorem~\ref{thm:I01B})
\begin{align*}
\left |  I_1 - I_2 \right |  & \le 
\frac{1}{2\pi  } \, \int_{-s}^{s} \, 
\left|e^{nR} - 1 \right| \, e^{-nt^2/2} \, E(t) \frac{dt}{t} \\
 & \le 2 \cdot \frac{1}{2\pi} \cdot \frac{\pi}{2} \cdot  n p_0 q_1
\, \int_{0}^{s} \, 
t^4 \, e^{-nt^2/2} \, \frac{dt}{t} .
\end{align*}
WGFF2 says that the last integral is equal to 
\[
\frac{1}{n^2} \left (1 - e^{-n s^2/2}\, (1+n s^2/2 ) \right).
\]
An easy calculus exercise shows that the factor in parentheses is between 0 and~$1$, and
the upshot is that the $|I_1-I_2|$ is bounded by $p_0 q_1/(2n)$ as claimed.
\end{proof}

\remark{: }  The last factor could be retained explicitly, giving a 
better estimate, especially when~$n$ is small.  
However, we ignore this because of (a) the general philosophy
of not worrying too much about small constant factors, and (b) if one is struggling with
having to take $n$ large in an unfavorable situation then the factor will be very close to 1 anyway.

\subsection{Eliminating $E$}

Define $I_3$ to be the result of erasing $E(t)$ in the integrand of $I_2$:
\BE{eq:I3}
I_3 = \frac{1}{2\pi i  } \, \int_{-s}^{s} \, e^{iu} \, e^{-nt^2/2} \, \frac{dt}{t} .
\EE

\begin{theorem} \label{thm:I23B}
If $s \le \sigma\pi/3$ then
\[
|I_2-I_3| \le \frac{p_1}{\mu_2 \, n}, \quad \mbox{where} \; p_1 := \frac{3(\pi-3)}{\pi^3} \simeq .0136997\ldots .
\]
\end{theorem}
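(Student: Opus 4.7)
The difference $I_2 - I_3$ is an integral whose integrand is $e^{iu}e^{-nt^2/2}(E(t)-1)/t$. Since $E(t)-1 = O(t^2)$ near $0$, this integrand is integrable in the ordinary sense (no principal value subtlety), and the bound reduces to estimating $|E(t)-1|/|t|$ against an easy Gaussian integral. The whole trick is to show that $E(t)-1$ is bounded by a constant multiple of $t^2/\sigma^2$ on the interval of integration, with the right constant.

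First I would exploit the structure of $D$. Writing $D(x) = (x/2)/\sin(x/2)$, the remark after Theorem~\ref{thm:TnCnt} says the power series of $D$ has positive coefficients (coming from Bernoulli numbers) and $D(0)=1$, so
\[
D(x) - 1 = \sum_{k\ge 1} d_k x^{2k}, \qquad d_k \ge 0.
\]
Hence $(D(x)-1)/x^2 = \sum_{k\ge 1} d_k x^{2k-2}$ is nondecreasing in $|x|$ on $[-\pi,\pi]$. The condition $s\le \sigma\pi/3$ guarantees $|t/\sigma|\le \pi/3$ throughout the integration, so
\[
\frac{E(t)-1}{(t/\sigma)^2} \;=\; \frac{D(t/\sigma)-1}{(t/\sigma)^2} \;\le\; \frac{D(\pi/3)-1}{(\pi/3)^2}.
\]
Now $D(\pi/3) = (\pi/6)/\sin(\pi/6) = \pi/3$, so $D(\pi/3)-1 = (\pi-3)/3$, and the ratio on the right is $3(\pi-3)/\pi^2$. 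Therefore on $[-s,s]$,
\[
0 \;\le\; E(t)-1 \;\le\; \frac{3(\pi-3)}{\pi^2\sigma^2}\, t^2.
\]

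With this in hand the rest is mechanical. Estimating the integrand by absolute value (using $|e^{iu}|=1$ and the bound just derived),
\[
|I_2 - I_3| \;\le\; \frac{1}{2\pi}\int_{-s}^{s} e^{-nt^2/2}\,\frac{3(\pi-3)}{\pi^2\sigma^2}\,|t|\,dt
\;=\; \frac{3(\pi-3)}{\pi^3\sigma^2}\int_0^{s} t\,e^{-nt^2/2}\,dt.
\]
By the second WGFF3 (or direct evaluation) $\int_0^{s} t\,e^{-nt^2/2}\,dt = (1-e^{-ns^2/2})/n \le 1/n$, giving
\[
|I_2 - I_3| \;\le\; \frac{3(\pi-3)}{\pi^3\,\sigma^2\,n} \;=\; \frac{p_1}{\mu_2\,n},
\]
as claimed.

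The only step that takes any thought is getting the correct constant $p_1 = 3(\pi-3)/\pi^3$ rather than something weaker: one must notice that $(D(x)-1)/x^2$ is monotone in $|x|$ (from positivity of the Bernoulli-number coefficients), so that its maximum on the integration interval occurs at the endpoint $x = \pi/3$, where $D$ happens to be computable in closed form. Everything else (evenness, integrability at $0$, the Gaussian tail) is routine, and the hypothesis $s \le \sigma\pi/3$ is used exactly once, to put $t/\sigma$ inside the region where the endpoint bound on $(D-1)/x^2$ is valid.
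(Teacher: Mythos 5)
Your proof is correct and follows essentially the same path as the paper: bound $E(t)-1$ by a constant multiple of $t^2$ via monotonicity of $(D(x)-1)/x^2$ (coming from positivity of the power-series coefficients), evaluate the ratio at the endpoint $\pi/3$ where $\sin$ is nice, and then integrate against the Gaussian. The only cosmetic difference is that you work directly with $D$ and evaluate $D(\pi/3)=\pi/3$, whereas the paper applies its OB lemma to $x/\sin(x)$ at $s=\pi/6$ and rescales by $t\mapsto t/(2\sigma)$; the constants and the logic are the same.
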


\begin{proof}
The power series $t/\sin(t)$ is even and has positive coefficients, so we apply the OB
to it and get
\[
\frac{t}{\sin(t)} - 1 \le t^2 \; \left(\frac{s/\sin(s)-1}{s^2}\right), \quad \mbox{if } \; |t| \le s. 
\]
Taking $s = \pi/6$ (for simplicity) gives $t/\sin(t)-1 \le 12(\pi-3)/\pi^2 \, t^2$ if $|t| \le \pi/6$.
Replacing $t$ by $t/(2\sigma)$ and doing a little algebra gives
\[
\left|E(t)-1\right| \le \frac{\pi p_1 t^2}{\mu_2}, \quad \mbox{if } \; |t| \le \pi\sigma/3.
\]
The theorem now follows from WGFF2:
\[
\left| I_2 - I_3 \right | 
\le  \frac{1}{\pi  } \, \int_{0}^{s} \, 
e^{-nt^2/2} \, \frac{\pi p_1 t^2}{\mu_2} \, \frac{dt}{t} 
\le  \frac{p_1}{ \mu_2 } \, \int_{0}^{\infty} \, 
e^{-nt^2/2} \, t^2 \, \frac{dt}{t} 
= \frac{p_1}{\mu_2n}.
\]
\end{proof}

\subsection{Eliminating $\sin$}

The integral of the odd function $\cos(u)/t$ on the symmetric interval $t \in [-s,s]$
is 0 (using the earlier principal value convention).
Then $e^{iu} = \cos(u) + i \sin(u)$ can be replaced $i\sin(u)$ and therefore
\BE{eq:I4}
I_3 = \frac{1}{2\pi  } \, \int_{-s}^{s} \, \sin(u) \, e^{-nt^2/2} \, \frac{dt}{t} .
\EE
For small~$t$, the quantity $u = \beta t - n \nu_3 t^3/6 $ is small, and
the approximation $\sin(u) \simeq u$ motivates defining
\BE{eq:I5}
I_4 := \frac{1}{2\pi  } \, \int_{-s}^{s} \, 
u \, e^{-nt^2/2} \, \frac{dt}{t} .
\EE

\begin{theorem} \label{thm:I34B}
\[
\left |I_3 - I_4\right| \le \frac{q_5}{\sqrt{2\pi} \, n^{3/2}}
\]
where
\[
q_3 = |\beta|, \quad q_4 = |\nu_3|/6, \quad
q_5 = \frac{q_3^3}{6} + \frac{3 q_3^2 q_4 }{2} + \frac{15 q_3 q_4^2}{2}+\frac{35q_4^3}{2}.
\]
\end{theorem}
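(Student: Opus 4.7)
The plan is to subtract the integrands, apply the elementary inequality $|\sin u - u| \le |u|^3/6$, and then expand the cube of $u = \beta t - n\nu_3 t^3/6$ and integrate term by term against the Gaussian $e^{-nt^2/2}$ using WGFF1.

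More explicitly, I would first write
\[
I_3 - I_4 = \frac{1}{2\pi} \int_{-s}^{s} \bigl(\sin u - u\bigr)\, e^{-nt^2/2}\, \frac{dt}{t},
\]
and note that $u/t = \beta - n\nu_3 t^2/6$ is an even function of $t$, so the integrand is even and no principal value issue arises. Apply $|\sin u - u| \le |u|^3/6$ (a standard consequence of Taylor's theorem with remainder) to obtain
\[
|I_3 - I_4| \le \frac{1}{12\pi} \int_{-s}^{s} |u|^3 \, e^{-nt^2/2}\, \frac{dt}{|t|}.
\]

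Next I would bound $|u|^3$ via the binomial theorem and triangle inequality: with $q_3 = |\beta|$ and $q_4 = |\nu_3|/6$,
\[
|u|^3 \le \bigl(q_3 |t| + n q_4 |t|^3\bigr)^3 = q_3^3 |t|^3 + 3 q_3^2 n q_4 |t|^5 + 3 q_3 n^2 q_4^2 |t|^7 + n^3 q_4^3 |t|^9.
\]
After dividing by $|t|$, the integrand in the bound becomes a linear combination of even powers $t^2, t^4, t^6, t^8$ weighted by the Gaussian. Since all these terms are nonnegative, extending the integration range from $[-s,s]$ to $(-\infty,\infty)$ only increases the bound, and we may evaluate
\[
\int_{-\infty}^{\infty} t^{2j}\, e^{-nt^2/2}\, dt = (2j-1)!!\, \sqrt{2\pi}\; n^{-j-1/2}, \qquad j=1,2,3,4,
\]
via (a straightforward consequence of) WGFF1, giving values $\sqrt{2\pi}/n^{3/2}$, $3\sqrt{2\pi}/n^{5/2}$, $15\sqrt{2\pi}/n^{7/2}$, $105\sqrt{2\pi}/n^{9/2}$. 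Each power of $n$ in front of the corresponding power of $t$ cancels against the $n^{-j-1/2}$ so that every term contributes at order $n^{-3/2}$.

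Assembling the pieces yields
\[
|I_3 - I_4| \le \frac{\sqrt{2\pi}}{12\pi\, n^{3/2}}\Bigl[q_3^3 + 9 q_3^2 q_4 + 45 q_3 q_4^2 + 105 q_4^3\Bigr] = \frac{q_5}{\sqrt{2\pi}\, n^{3/2}},
\]
once one recognizes that the bracketed quantity is exactly $6 q_5$ for the $q_5$ defined in the statement. There is no real obstacle here beyond careful bookkeeping of the constants $1, 9, 45, 105$ coming from the product of binomial coefficients $\binom{3}{k}$ with the double factorials $(2j-1)!!$; the only place to slip is in collecting these numerical factors, so I would double-check that $9 = 3 \cdot 3$, $45 = 3 \cdot 15$, and $105 = 1 \cdot 105$ match the definition of $q_5$ before writing the final line.
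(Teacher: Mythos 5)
Your proof is correct and follows essentially the same approach as the paper: applying $|\sin u - u| \le |u|^3/6$, expanding the cube via the triangle inequality and binomial theorem, extending the integral to the full real line, and evaluating the resulting Gaussian moments term by term using WGFF1. The bookkeeping of the constants $1, 9, 45, 105$ and the final simplification to $q_5/(\sqrt{2\pi}\,n^{3/2})$ all check out.
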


\begin{proof}
For any real number $x$, $|\sin(x)-x| \le |x|^3/6$.  
Applying this to $u = \beta t - n\nu_3 t^3/6$ gives
\[
|\sin(u)-u| \le \frac{|u|^3}{6} \le \frac{1}{6} 
\left ( q_3^3 t^3 + 3 q_3^2 q_4  n t^5  + 3 q_3q_4^2 n^2 t^6  +q_4^3 n^3 t^9  \right ).
\]
The claimed inequality follows by integrating and using the second WGFF1, i.e.,
\BE{eqn:wgff1}
\int_0^\infty t^{2k-1} \, e^{-ct^2/2} \, \frac{dt}{t} = c^{-k+1} \, (2k-3)!! 
\sqrt{\frac{\pi}{2c}} , \\
\EE
for $k = 2,3,4,5$ to get
\begin{align*}
|I_4 -I_3| & \le \frac{1}{2\pi  } \, \int_{-s}^{s} \, 
\frac{|u|^3}{6} \, e^{-nt^2/2} \, \frac{dt}{t} \\
& \le  \frac{1}{6\pi  } \, \int_{0}^{\infty} \, 
\left ( q_3^3 t^3 + 3 q_3^2 q_4  n t^5  + 3 q_3q_4^2 n^2 t^6  +q_4^3 n^3 t^9  \right )\, e^{-nt^2/2} \, dt \\
& = \frac{1}{6\pi  } \; \sqrt{\frac{\pi}{2n}} \;
\left ( \frac{q_3^3 + 9 q_3^2 q_4   + 45 q_3q_4^2  +105 q_4^3 }{n}  \right ).
\end{align*}
\end{proof}

\subsection{Extending to the whole real line}

The next (and final!) integral is obtained by extending the interval 
of integration to the whole real line, i.e.,
\[
I_5  := \frac{1}{2\pi}  \, \int_{-\infty}^{\infty} \, u \, e^{-nt^2/2} \, \frac{dt}{t} .
\]
This integral can be evaluated immediately using WGFF1 above for $k = 1$ and $k = 2$:
\begin{align*}
I_5 & = \frac{1}{2\pi  } \, \int_{-\infty}^{\infty} \, (\beta - n\nu_3 t^2 /6) \, 
           e^{-n t^2/2} \, dt \\
    & = \frac{\beta}{\pi  } \, \int_{0}^{\infty} \, e^{-nt^2/2} \, dt -
    \frac{n\nu_3}{6\pi  } \, \int_0^{\infty} \, t^2  \, e^{-nt^2/2} \, dt\\
    & = \frac{1}{\sqrt{2\pi n}} \, \left(\beta - \frac{\nu_3}{6}\right) =
\frac{\LL}{\sqrt{2\pi n}}.
\end{align*}

\begin{theorem} \label{thm:I45B}
\[
\left | I_4 - I_5 \right | \le 
\left(\frac{q_3+q_4  + q_4 n s^2}{\pi ns}\right)  \; 
e^{-n s^2/2}.
\]
\end{theorem}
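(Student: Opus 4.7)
The plan is to observe that $I_4$ and $I_5$ have identical integrands and differ only in the range of integration, so $I_5 - I_4$ is precisely the tail contribution coming from $|t| > s$. Since $u = \beta t - n\nu_3 t^3/6$, the integrand $u \, e^{-nt^2/2}/t = (\beta - n\nu_3 t^2/6)\, e^{-nt^2/2}$ is an even function of $t$ (the singularity at $t=0$ is removable, so the principal value subtlety doesn't interfere in the tail). Consequently,
\[
I_5 - I_4 \;=\; \frac{1}{\pi}\int_s^{\infty} \bigl(\beta - n\nu_3 t^2/6\bigr)\, e^{-nt^2/2}\, dt.
\]

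Next I would bound by the triangle inequality, replacing $|\beta|$ by $q_3$ and $n|\nu_3|/6$ by $q_4 n$, giving
\[
|I_4 - I_5| \;\le\; \frac{1}{\pi}\int_s^{\infty} \bigl(q_3 + q_4 n t^2\bigr)\, e^{-nt^2/2}\, dt
\;=\; \frac{q_3}{\pi}\int_s^{\infty} e^{-nt^2/2}\, dt \;+\; \frac{q_4 n}{\pi}\int_s^{\infty} t^2 e^{-nt^2/2}\, dt.
\]
The two tail integrals are exactly of the type listed in Proposition~\ref{prop:G}: the first is bounded (via the standard integration-by-parts estimate, equivalently the second WGFF3 applied after writing $e^{-nt^2/2} = t^{-1}\cdot t \, e^{-nt^2/2}$) by $e^{-ns^2/2}/(ns)$, and the second is bounded directly by the third WGFF3 by $e^{-ns^2/2}(1+ns^2)/(n^2 s)$.

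Substituting these two estimates yields
\[
|I_4 - I_5| \;\le\; \frac{e^{-ns^2/2}}{\pi n s}\bigl(q_3 + q_4(1 + n s^2)\bigr)
\;=\; \left(\frac{q_3 + q_4 + q_4 n s^2}{\pi n s}\right)e^{-ns^2/2},
\]
which is the claimed bound. There is no real obstacle here; the only thing to be careful about is getting the factor of $2$ from the even symmetry correct (so that $1/(2\pi)$ becomes $1/\pi$) and tracking the two WGFF3 estimates so their sum collapses cleanly into the advertised numerator $q_3 + q_4 + q_4 n s^2$. In particular, no further assumption on $s$ beyond $s>0$ is needed for this step.
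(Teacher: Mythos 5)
Your proof is correct and follows essentially the same route as the paper's: exploit the even symmetry of $u\,e^{-nt^2/2}/t$ to reduce to twice the right tail, apply the triangle inequality to get the integrand $(q_3+q_4 n t^2)e^{-nt^2/2}$, and then bound the two resulting tail integrals using the WGFF3 estimates of Proposition~\ref{prop:G}. The only cosmetic difference is that you spell out the $e^{-nt^2/2}\le t^{-1}\cdot t\,e^{-nt^2/2}$ step for the first tail integral, which the paper leaves implicit.
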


\begin{proof}
The two tails of $I_5-I_4$ are equal.  Multiplying by 2 and using the
last two parts of WGFF3 gives
\begin{align*}
|I_5-I_4| & \le \frac{1}{\pi } \, \int_{s}^{\infty} \, 
(q_3 + q_4 n t^2) \, e^{-nt^2/2} \, dt \\
& \le \frac{q_3}{\pi} \cdot \frac{e^{-ns^2/2}}{ns} + \frac{q_4n}{\pi} \cdot 
\frac{e^{-ns^2/2}}{n^2s} \cdot (1+ns^2) \\
& = \left(q_3+q_4 (1 + n s^2 )\right) \; 
\frac{e^{-n s^2/2}}{\pi ns }.
\end{align*}
\end{proof}

\subsection{Finishing the proof}

During the proof the constant $s$ was required to be smaller than $\pi \sigma, 1, 
\pi \sigma/3,$ and $1/(q_1n)^{1/4}$, which can be summarized as
\BE{sbound}
s \le \min(1,\pi \sigma/3, (q_1n)^{-1/4}).
\EE

We remind the reader of the notation and then state the main theorem: $X$ is
a lattice random variable with mean~0, span~1, shift~$a$, and finitely many
values; $X[n]$ denotes the sum of $n$ IID copies of~$X$.
The standard deviation of $X$ is $\sigma$, and the moments and normalized
moments are denoted $\mu_k = \E X^k$, $\nu_k = \mu_k/\sigma^k$.
Moreover, $\beta = \beta_n$ is shorthand for $(1-\{na\})/\sigma$.
Various further constants are defined as follows:
\[
\begin{array}{l@{\qquad}l}
 p_0 = e-1 \simeq 1.71828 \ldots & {\displaystyle p_1 := 3(\pi-3)/\pi^3 \simeq .0136997\ldots}  \\
 {\displaystyle q_1 := \frac{1}{5} + \frac{\nu_4}{24}} & 
 {\displaystyle q_2 := \frac{p_0  q_1}{2} + \frac{p_1}{\mu_2}}\\
 q_3 := |\beta| & {\displaystyle q_4 = |\nu_3|/6}  \\
 {\displaystyle q_5 := \frac{q_3^3}{6} + \frac{3 q_3^2 q_4 }{2} + 
    \frac{15 q_3 q_4^2}{2}+\frac{35q_4^3}{2}}  & {\displaystyle r  := \frac{16m}{(\pi C \sigma)^2}}  \\
\end{array}
\]
where, as earlier, $C$ is the $L_1$ norm of a certificate $\{c_x\}$ for $X$ (i.e., a
guarantor that~$X$ has span~1), and $m = \min(p_x)$ is
the minimum probability of the values of~$X$ that are in the support of the certificate.

\begin{theorem}
\label{thm:main}
Fix a positive integer~$n$ and let $s$ be a positive real number~$s$ that satisfies~(\ref{sbound}).
Then
\[
\Prb(\Xn < 0)  = \frac{1}{2} - \frac{\LL}{\sqrt{2\pi n}} + \EL
\]
where
\[
\LL = \beta-\frac{\nu_3}{6}
\]
and
\[
|\EL| \le \frac{q_2}{n} + \frac{e^{-nr/2}}{2nr} + \frac{q_5}{\sqrt{2\pi} n^{3/2}} + e^{-ns^2/2} \; 
\left (p_0q_1s^2 + \frac{1}{ns^2} + \frac{2 p_0q_1}{n} + \frac{q_3 + q_4}{\pi ns} + \frac{q_4s}{\pi}\right ).
\]
\end{theorem}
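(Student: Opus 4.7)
The plan is to chain together the sequence of approximations $I_0 \simeq I_1 \simeq \cdots \simeq I_5$ developed in the preceding subsections. Theorem~\ref{thm:TnCnt} gives $\Prb(\Xn < 0) = 1/2 - I_0$, and direct evaluation using WGFF1 yields $I_5 = \LL/\sqrt{2\pi n}$, so the triangle inequality reduces the entire problem to
$$|\EL| = |I_0 - I_5| \le \sum_{k=0}^{4} |I_k - I_{k+1}|.$$
It then suffices to substitute the five bounds already proven, after first verifying that the standing assumption~(\ref{sbound}) enforces each individual hypothesis ($s \le \pi\sigma$, $s \le 1$, $s \le \pi\sigma/3$, and $s \le (q_1 n)^{-1/4}$) needed by the component theorems.

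For the middle four transitions, I would simply plug in the bounds from Theorems~\ref{thm:I12B}, \ref{thm:I23B}, \ref{thm:I34B}, and \ref{thm:I45B}. The first two of these combine into the $q_2/n$ term of the statement, since $q_2 = p_0 q_1/2 + p_1/\mu_2$ by definition. The third gives the $q_5/(\sqrt{2\pi}\,n^{3/2})$ term directly. The fourth contributes the $(q_3+q_4)/(\pi n s)$ and $q_4 s/\pi$ pieces inside the $e^{-ns^2/2}$ bundle, via the identity $(q_3+q_4+q_4 n s^2)/(\pi n s) = (q_3+q_4)/(\pi n s) + q_4 s/\pi$.

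The one nontrivial step is refining the $I_0 - I_1$ bound. Theorem~\ref{thm:I01B} uses only the crude estimate $|g(t)|^n \le e^{-nrt^2/2}$, which decays like $e^{-nrs^2/2}$ rather than the desired $e^{-ns^2/2}$; this is much too weak when $r$ is small and $s$ is on the order of $n^{-1/4}$. To get the sharper rate, I would split the tail region $[s, \pi\sigma]$ as $[s,1] \cup [1, \pi\sigma]$. On $[s,1]$, Corollary~\ref{cor:tail}, which exploits $g(t)^n \approx e^{-nt^2/2}$ near the origin, produces exactly the $e^{-ns^2/2}\bigl(p_0 q_1 s^2 + 1/(ns^2) + 2 p_0 q_1 / n\bigr)$ part of the stated bound. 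On $[1, \pi\sigma]$ the crude estimate is fine and I would apply the first WGFF3 with lower limit $1$ and $c = nr$, producing the single term $e^{-nr/2}/(2nr)$. Throughout, $|E(t)| \le \pi/2$ by~(\ref{ineq:D}) handles the $E(t)$ factor in the integrand.

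The main obstacle is pure bookkeeping rather than mathematics: five separate error contributions must be routed through the various normalized constants and grouped precisely as stated, and one must check that (\ref{sbound}) simultaneously activates every hypothesis invoked. No genuinely new estimate is required beyond the subsection results.
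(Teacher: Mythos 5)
Your proposal reproduces the paper's proof essentially verbatim: the same decomposition $|\EL| \le \sum_{k=0}^4 |I_k - I_{k+1}|$, the same invocation of Theorems~\ref{thm:I12B}--\ref{thm:I45B} for the middle four gaps, and the same refinement of the $I_0$--$I_1$ tail bound by splitting $[s,\pi\sigma]$ at $t=1$ and applying Corollary~\ref{cor:tail} on $[s,1]$ and the crude $|g|^n \le e^{-nrt^2/2}$ bound (via WGFF3) on $[1,\pi\sigma]$. This is the paper's argument, with the same observation that (\ref{sbound}) simultaneously validates every hypothesis used along the way.
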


\begin{proof}
The bound (\ref{sbound}) above guarantees that all of the results that required $s$ to be
small enough are valid.
If $I(j,j+1)$ denotes the bound proved above on the difference between $I_j$ and $I_{j+1}$ then
\[
|\EL| \le I(0,1) +I(1,2) +I(2,3) +I(3,4) +I(4,5).
\]
The last four terms were proved in 
Theorem~\ref{thm:I12B},
Theorem~\ref{thm:I23B},
Theorem~\ref{thm:I34B},
and Theorem~\ref{thm:I45B} respectively,
giving
\[
I(1,2) +I(2,3) +I(3,4) +I(4,5) = \frac{q_2}{n} + \frac{q_5}{\sqrt{2\pi} \, n^{3/2}} +
e^{-ns^2/2} \, \left ( \frac{q_3+q_4+q_4 ns^2}{\pi n s} \right ).
\]
We improve the bound for $|I_0-I_1|$ given in Theorem~\ref{thm:I01B} by writing
\[
\int_s^{\pi\sigma} |g(t)|^n \frac{dt}{t} =
\int_s^{1} |g(t)|^n \frac{dt}{t} +
\int_1^{\pi\sigma} |g(t)|^n \frac{dt}{t}
\]
and apply Theorem~\ref{thm:I01B} to the last integral and Corollary~\ref{cor:tail} to the integral
from $s$ to~$1$.  The result is
\[
|I_1-I_0| \le  e^{-ns^2/2} \, \left ( \frac{1}{ns^2} + \frac{2p_0q_1}{n}+p_0q_1s^2 \right) 
+ \frac{e^{-nr/2}}{2nr}.
\]
The theorem follows with a little algebra.
\end{proof}

In the next section this will be used to state a theorem about the tilt, and some simple assumptions
will be made that will simplify and clarify this expression.

One possible major improvement to the estimate would come from
using the next higher order Edgeworth expansion. 
The error term would become $O(1/n^2)$.
However, our hunch is that this would introduce two further problems: the number of
terms in the algebraic expressions
would explode, and the upper bound on $s$ would probably be $O(n^{-1/6})$, so it 
is not immediately clear that the resulting $n_0$ would be much better.

\section{Tilt}

The goal of this section is to apply the Main Theorem to the tilt.

The first subsection makes a few observations about the error bounds.  The second reverts to the
case of dice, giving a theorem about the tilt.  Finally, the third subsection considers some
numerical examples.

\subsection{Observations}

With the notation in Theorem~\ref{thm:main}, fix~$n$ and consider the error bound
as a function of~$s$.  The only two terms that are not obviously decreasing as $s$
increases are (constant multiples of) $se^{-ns^2/2}$ and $s^2e^{-ns^2}/2$.
Differentiating with respect to~$s$ shows that these are both decreasing if $ns^2\ge 2$.
Since this is a very mild assumption, we will just assume it from now on.
This implies that the optimal $s$ is just the smallest value in (\ref{sbound}) above.

In that bound
\[
s \le \min(1,\pi \sigma/3, (q_1n)^{-1/4}).
\]
the last quantity will usually be the smallest.
To simplify and focus the notation, assume that this is the case, i.e., that 
\[
\frac{1}{(q_1n)^{1/4}} \le \min(1,\pi \sigma/3).
\]
Assuming this, requiring $ns^2\ge 2$ as above, setting $s = (q_1n)^{-1/4}$, and adopting
the notational shorthand
\[
\eta = 2\sqrt{n/q_1}
\]
(so that $ns^2/2 = \eta $), allows a restatement of the error bound as follows: 

\noindent If 
\[
n \ge \max\left(\frac{q_1}{4},\frac{1}{q_1},\frac{81}{q_1\pi^4\mu_2^2}\right)
\]
then
\[
|\EL| \le \frac{q_2}{n} + \frac{e^{-nr/2}}{2nr} + \frac{q_5}{\sqrt{2\pi} n^{3/2}} + e^{-\eta } \; 
\left (\frac{p_0+1}{2\eta } + \frac{2 p_0q_1}{n} + \frac{1}{\pi \sqrt[4]{q_1n}}\left(\frac{q_3 + q_4}{2\eta } + q_4\right )\right).
\]

For the sake of the next section we note that if $X$ is replaced by $-X$ in the theorem
then, as should be expected, the bounds on the error are unchanged.
Indeed, the only relevant change is that the sign of $a$ is negated, so that $q_3$ might
change.
However, if $x = \{na\}$ then and $y = \{-na\}$ then either $x = y = 0$, or $y = 1-x$.  
If $x = y = 0$ then $q_3$ is obviously unchanged.  If $y = 1-x$ then
\[
q_3 = \frac{|1/2-x|}{\sigma} = 
\frac{|-1/2+y|}{\sigma} = 
\frac{|1/2-y|}{\sigma} 
\]
and $q_3$ is again unchanged.

\subsection{Tilt for dice}

For the sake of applications it is convenient to return to looking at the tilt in the case of dice.
Let $X$ be a bounded integer-valued random variable
with span~$b$, shift~$a$, mean~$0$, and moments $\mu_k = \E X^k$,
$\sigma^2 = \mu_2$, and $\nu_k = \mu_k/\sigma^k$.

Apply the Main Theorem (using the reformulation in the preceding section)
to $X/b$.  Scale-invariant quantities are unchanged, but
$\sigma(X/b)  = \sigma/b$.
The quantity $\beta$ is unchanged, but note that
\[
\beta = \frac{1/2 - \{na/b\}}{\sigma/b} = \frac{b/2 - b\{na/b\}}{\sigma} 
= \frac{b/2 - na \bmod b}{\sigma}.
\]
The other constants are changed only in so far as $\sigma$ has to be replaced by $\sigma/b$:
\[
\begin{array}{l@{\qquad}l}
 p_0 = e-1 \simeq 1.71828 \ldots & {\displaystyle p_1 := 3(\pi-3)/\pi^3 \simeq .0136997\ldots}  \\
 {\displaystyle q_1 := \frac{1}{5} + \frac{\nu_4}{24}} & 
 {\displaystyle q_2 := \frac{p_0  q_1}{2} + \frac{b^2 p_1}{\mu_2}}\\
 q_3 := |\beta| & {\displaystyle q_4 = |\nu_3|/6}  \\
 {\displaystyle q_5 := \frac{q_3^3}{6} + \frac{3 q_3^2 q_4 }{2} + 
    \frac{15 q_3 q_4^2}{2}+\frac{35q_4^3}{2}}  & {\displaystyle r  := \frac{16b^2m}{(\pi C \sigma)^2}}  \\
\end{array}
\]
where, as earlier, $C$ is the $L_1$ norm of a certificate $\{c_x\}$ for $X$, i.e., a
collection $\{c_x\}$ for a set of values~$x$ of~$X$ such that $\sum c_x = 0, \sum c_x x = b$.
Note that the inclusion of $b$ in the formulae does not actually change the 
values of the constants.  For instance, in the span-1 case of $X/b$ the standard
deviation is $\sigma/b$, where now ``$\sigma$'' refers to the span of~$X$ and not that
of $X/b$.

\begin{theorem}\label{Tnbound}
Let $X$ be as above.
Fix~$n$ with $n \ge \max\left(q_1/4,1/q_1, 81b^4/(q_1 \pi^4 \mu_2^2)\right)$.
Then
\[
T_n = \frac{1}{\sqrt{2\pi n}} \left ( \frac{(-na)\bmod b - (na \bmod b)}{\sigma} - \frac{\nu_3}{3} \right ) + E
\]
where
\[
|E| \le \frac{2q_2}{n} + \frac{e^{-nr/2}}{nr} + \frac{2q_5}{\sqrt{2\pi} n^{3/2}} + e^{-\eta} \; 
\left (\frac{1+p_0}{\eta} + \frac{4 p_0q_1}{n} + \frac{1}{\pi \sqrt[4]{q_1n}}
\left(\frac{q_3 + q_4}{\eta} + 2q_4\right )\right)
\]
and $\eta = 2\sqrt{n/q_1}$ so that $ns^2/2 = \eta$.
\end{theorem}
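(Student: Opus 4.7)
The plan is to deduce this directly from Theorem~\ref{thm:main} applied twice: once to $X/b$ and once to $-X/b$, and then combine the results. Both $X/b$ and $-X/b$ are span-1, mean-0 lattice variables, so the rescaled Main Theorem from the preceding subsection applies with $\sigma$ replaced by $\sigma/b$.

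First I would rewrite the tilt in a symmetric form. By definition $T_n = \Prb(X[n] > 0) - \Prb(X[n] < 0)$, and the observation used for the second part of Theorem~\ref{thm:TnCnt} gives $\Prb(X[n] > 0) = \Prb((-X)[n] < 0)$. Therefore
\[
T_n \;=\; \Prb((-X)[n] < 0) \;-\; \Prb(X[n] < 0).
\]
Applying the Main Theorem to $X/b$ gives $\Prb(X[n]<0) = \tfrac12 - \LL/\sqrt{2\pi n} + \EL$ with $\LL = \beta - \nu_3/6$, where $\beta = (b/2 - na\bmod b)/\sigma$. Applying it to $-X/b$ — whose shift is $-a$, whose $\nu_3$ is negated, and whose $\mu_2$, $\nu_4$, $\sigma$, and $r$ are unchanged — gives a corresponding expression with $\beta' = (b/2 - (-na)\bmod b)/\sigma$ replacing $\beta$ and $-\nu_3$ replacing $\nu_3$. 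The $1/2$'s cancel when I subtract.

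The leading terms then combine into
\[
\frac{1}{\sqrt{2\pi n}}\Bigl((\beta - \tfrac{\nu_3}{6}) - (\beta' + \tfrac{\nu_3}{6})\Bigr) \;=\; \frac{1}{\sqrt{2\pi n}}\left(\frac{(-na)\bmod b - na\bmod b}{\sigma} - \frac{\nu_3}{3}\right),
\]
which is the claimed main term. For the error, I use the observation at the end of Section~3.1 that the constants $q_1, q_2, q_3, q_4, q_5, r$ entering the restated bound on $|\EL|$ are invariant under $X \mapsto -X$: only $a$ flips sign, but $q_3 = |\beta|$ is preserved because $\{-na\}$ is either $0$ or $1-\{na\}$, so $|1/2 - \{-na\}| = |1/2 - \{na\}|$. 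Thus the same bound on $|\EL|$ from the restated Main Theorem applies to both $X$ and $-X$, and by the triangle inequality the total error $E$ in $T_n$ satisfies $|E| \le 2|\EL|$, which is precisely the bound in the statement. The $n$-lower-bound hypothesis is also identical for $\pm X$ since it depends only on invariant quantities.

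The only real obstacle is bookkeeping: keeping straight the rescaling by $b$ in each $\sigma$-dependent constant (which is already absorbed in the definitions of $q_2$ and $r$ listed just before the theorem), and verifying that $\beta$ transforms cleanly into $(b/2 - na\bmod b)/\sigma$ after multiplying numerator and denominator of $(1/2 - \{na/b\})/(\sigma/b)$ by $b$. Once these identifications are made, the proof is just the three-line subtraction above together with the invariance observation — no new analytic estimates are required.
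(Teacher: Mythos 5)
Your proposal is correct and matches the paper's own argument: the paper also proves Theorem~\ref{Tnbound} by writing $T_n = \Prb((-X)[n]<0) - \Prb(X[n]<0)$, applying the (rescaled, restated) Main Theorem to both $X/b$ and $-X/b$, noting that the error-bound constants are invariant under $X \mapsto -X$ as established in the ``Observations'' subsection, and doubling the single-probability error bound. Your write-up simply spells out the subtraction of the two $\LL$'s and the invariance check in a bit more detail than the paper's one-line proof does.
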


\begin{proof}
The theorem follows from the fact that
\[
T_n = \Prb(\Xn> 0) - \Prb(\Xn < 0) = \Prb((-X)[n]<0)-\Prb(\Xn<0)
\]
and the earlier remarks on the error bounds.
\end{proof}

\subsection{Examples}

We apply the above results to several examples, comparing the actual $n_0$ at which 
asymptopia arrives to various approximations that emerge from Theorem~\ref{Tnbound}.

Let $X$ be the random variable that takes values $-3$, $1$, $5$ with respective probabilities
$1/2$, $1/4$, $1/4$, so that its PGF is
\[
X(z) = (2z^{-3}+z+z^5)/4
\]
(it is convenient to identify dice with their PGFs).
Then $X$ has mean 0, span 4 and shift 1, so there are really four cases that 
have to be considered: $n$ going to infinity through integers that are $c \bmod 4$
for $c = 0,1,2,3$;  in the table below data connected with case $c$ is on the line
labeled $X_c$.

With~$c$ fixed, let $n_0$ be the smallest integer such that
the sign of $T_n$ is equal to the sign of~$L$ for all $n \ge n_0$, $n \equiv c \bmod b$, 
i.e., $n_0$ is the exact point at which asymptopia has arrived in the congruence
class~$c$.

The term $2q_2/n$ in the error bound for $T_n - L/\sqrt{2\pi n}$ is
unavoidable in any bound obtained by using Edgeworth expansions as above, and we
will call this the ``principal term'' of the error, motivated by the fact that in
sufficiently favorable circumstances it will be the dominant term.
In particular, it is impossible for us to {\em prove} that asymptopia has arrived unless
$n$ is large enough so that
\[
\sqrt{2\pi n} \; \frac{ 2 q_2}{n} < |L|, \quad \mbox{i.e.,} \quad
n \ge n_1 :=  \frac{8\pi q_2^2}{L^2}.
\]
Thus $n_1$ is a lower bound on asymptopia arrival
that could be proved by our techniques.
Given the nature of some of the error bounds, it
is reasonable to expect that $n_1$ might might actually be larger than $n_0$ in many cases; 
however, a later example gives an instance where $n_1$ is lower than~$n_0$.

Finally, let $n_2$ be the number, produced by using the error estimates for a fixed congruence
class modulo~$b$ directly as stated in
Theorem~\ref{Tnbound}, such that for $n \ge n_2$ (and in the given congruence class), 
$\sqrt{2\pi n} \; \EB(n) < |L|$, where $EB(n)$ is the error bound in the theorem.
In other words, Theorem~\ref{Tnbound} can be used to show that asymptopia has arrived by $n_2$
in the given congruence class.

The values found  using (a computer and) Theorem~\ref{Tnbound} are:
\[
\begin{array}{rrrrr}
    &  L        &   n_0   &   n_1 &       n_2   \\ \hline
X_0 & -0.16446  &    4    &   59  &        74   \\  
X_1 &  0.43856  &    5    &    9  &        37   \\
X_2 & -0.16446  &    2    &   59  &        70   \\
X_3 & -0.76748  &    3    &    3  &        27   \\
\end{array}
\]
As expected, smaller values of $|L|$ require larger $n$.
A close examination of the seven terms in the error bound show that the last five
rapidly become negligible as $n$ becomes large, whereas the first two terms --- the principal term 
and the tail bound, $e^{-rn/2}/(rn)$, are significant.
The tail bound can be decreased, as we will see below, by looking at the tail integral more
closely.
However, in the $X_c$ cases the amount of computer time required to compute the tilts up
to~$n_2$ is negligible, so we will not bother trying to improve the tail bound in
these cases, despite the gap between $n_0$ and $n_2$.

Let $Y$ be the random variable that takes the value $-8$ with probability $1/2$, the
value $0$ with probability $1/18$, and the value 9 with probability $4/9$; its PGF is
\[
Y(z) = (9z^{-8}+1+8z^9)/18.
\]
The span of $Y$ is 1.  One (of the several) indications that this might be a problematic 
case is that the probability of $0$ is small, and the span changes to 17 if this probability is
set to 0 (while suitably rebalancing the other two probabilities).

To give a sense of the various components of the error, write
\[
\EB = \frac{2q_2}{n} + \frac{e^{-rn/2}}{rn}+ \TR
\]
where $\EB$ is the total error bound and $\TR$ (``the rest'') is the sum of five other terms.
In the following table $n$ is either close to $n_0 = 761$, $n_1 =682$, or $n_2 = 182024$.
The columns are: $n$, the total error $\EB$, the principal error, and the tail error.
(All errors have been multiplied by $\sqrt{2\pi n}$ to make comparison with $L$ easy). In all cases, 
$\TR$ is less than $10^{-5}$ and it is not tabulated.
All decimal expansions are \emph{truncated} rather than rounded, and the constant 
$L$ is equal to $-0.0404226\ldots$.
\[
\begin{array}{cccc}
n      & \EB       & 2q_2/n    & e^{-rn/2}/(rn) \\
681    & 1128.163 & 0.0404310 & 1128.122 \\
682    & 1127.289 & 0.0404013 & 1127.248 \\
761    & 1063.690 & 0.0382468 & 1063.652 \\
182023 & 0.040423 & 0.0024730  & 0.0379500 \\
182024 & 0.040421 & 0.0024729  & 0.0379483 \\
\end{array}
\]

The delicate nature of this case is illustrated by the graph of
$\sqrt{2\pi n}\, T_n$ for $n$ from  1 to 800, and for $n$ from 740 to 800, given
in Figure~1 and Figure~2.

\begin{figure}
\includegraphics[width=\textwidth,height=2in]{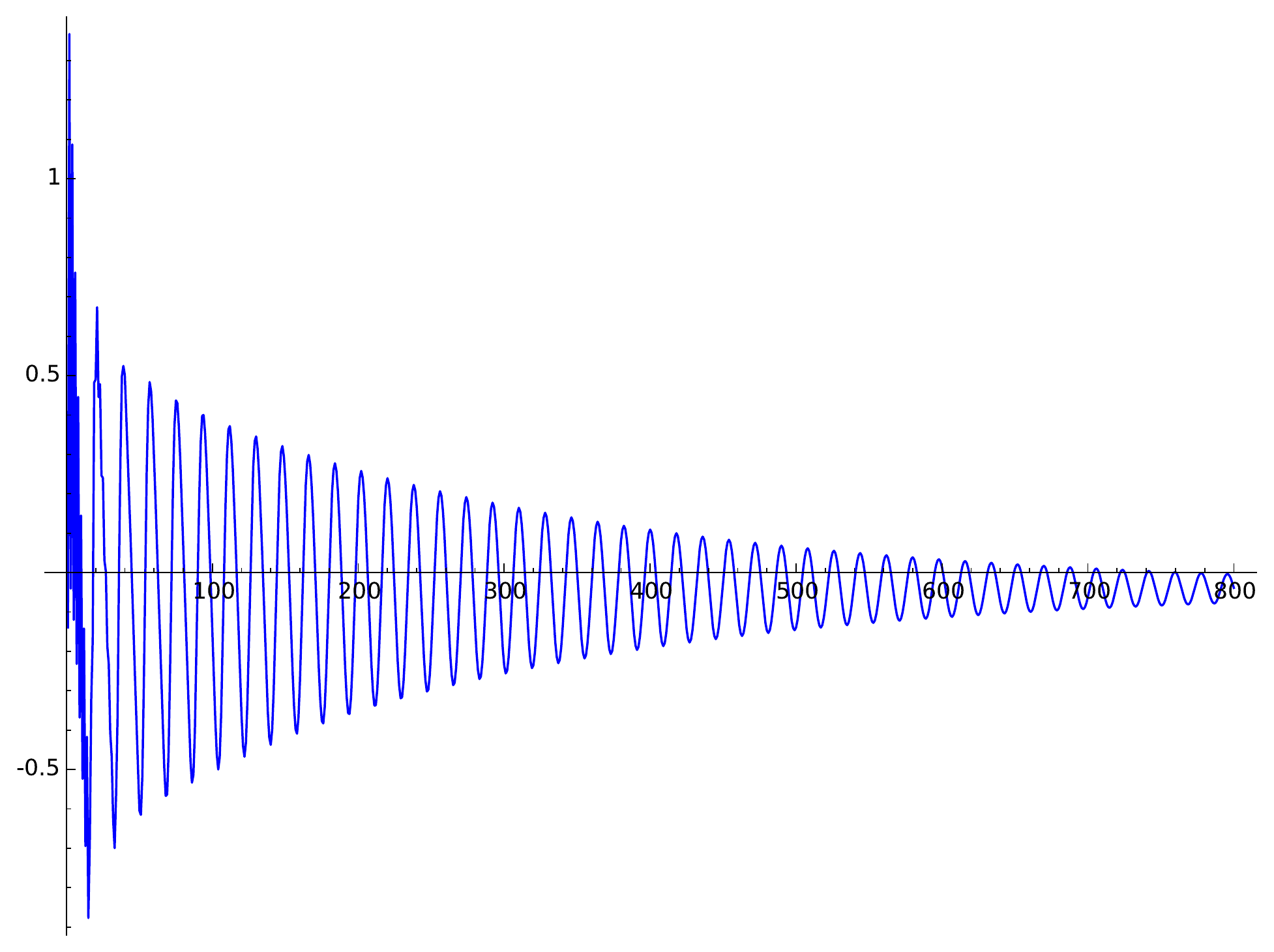}
\caption{$\sqrt{2\pi n}\, T_n$, $1 \le n \le 800$}
\end{figure}

\begin{figure}
\includegraphics[width=\textwidth,height=2in]{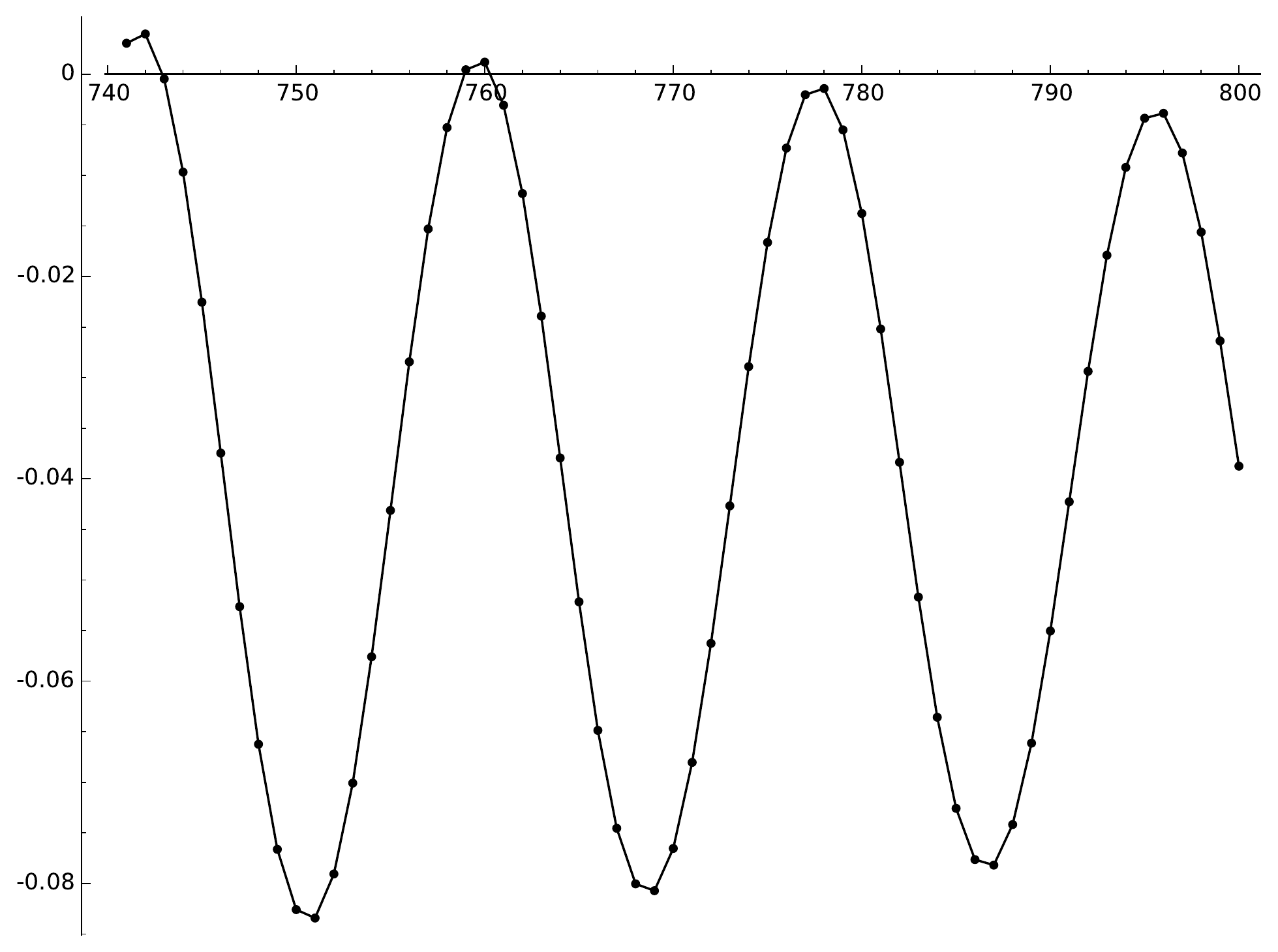}
\caption{$\sqrt{2\pi n}\, T_n$, $740 \le n \le 800$}
\end{figure}

Clearly $Y$ ``nearly''  has span~17, so that the graph is the result of applying
a damping function to a function that is periodic of period~17.
However, at $n_0 = 761$ the damping finally forces to the tilt to become, and forever stay,
negative.
The actual numerical values in the vicinity of~$n_0$, and the next local maxima, are:
\[
\begin{array}{rrrrr}
    n  &     759  & 760 & 761 & 762 \\
 \sqrt{2\pi n}\,T_n & 0.000439 & 0.001195 & -0.003066 & -0.011796 \\
  & & \cdots & & \\
    n  & 776 & 777 & 778 & 779 \\ 
 \sqrt{2\pi n}\,T_n & -0.007300 & -0.002028 & -0.001415 & -0.005505 \\
\end{array}
\]

The size of the tail bound near $n_0$ was a surprise to us; it is 
15 times the size of the principal error term.
Moreover, we had thought that
Corollary~\ref{cor:tail} shifted the extreme tail bound to an exponential term
of the form $e^{-cn}$ rather than $e^{-c\sqrt{n}}$, and that this would be good enough.
The key point is of course that the constant $r$ is uncomfortably small.
We wondered whether the alternate $r$ in \cite{Benedicks} would be better, but in the
case of $Y$ Benedick's constant is very slightly worse than ours, and gives 
essentially the same $n_2$.
We tried to replace $r$ with the optimal value 
\[
r_{\rm opt} = \min_{t \in [-\pi,\pi]} \frac{1-|f(t)|}{t^2}.
\]
This gave a factor of improvement to $n_2$ of somewhat more than~2:

\[
\begin{array}{lcr}
 & r & n_2 \\
\mbox{Benedicks}   & .0026144\ldots & 194{,}081 \\
\mbox{ours}  & .0028144 \ldots & 182{,}024\\
\mbox{optimal} & .0055834\ldots & 88{,}181 \\
\end{array}
\]

The graph of the absolute value $|f(t)|$ of the CF shows what the problem is.
The goal is to find an upper bound for the integral of $|f(t)|^n/t$ outside
$[-1/\sigma,1/\sigma]$, as $n$ gets large.
The function $1-rt^2 \le e^{-rt^2}$ is a natural choice, but does not
work well for a characteristic function whose secondary peaks are so high.

\begin{figure}[b]
\includegraphics[width=\textwidth,height=2in]{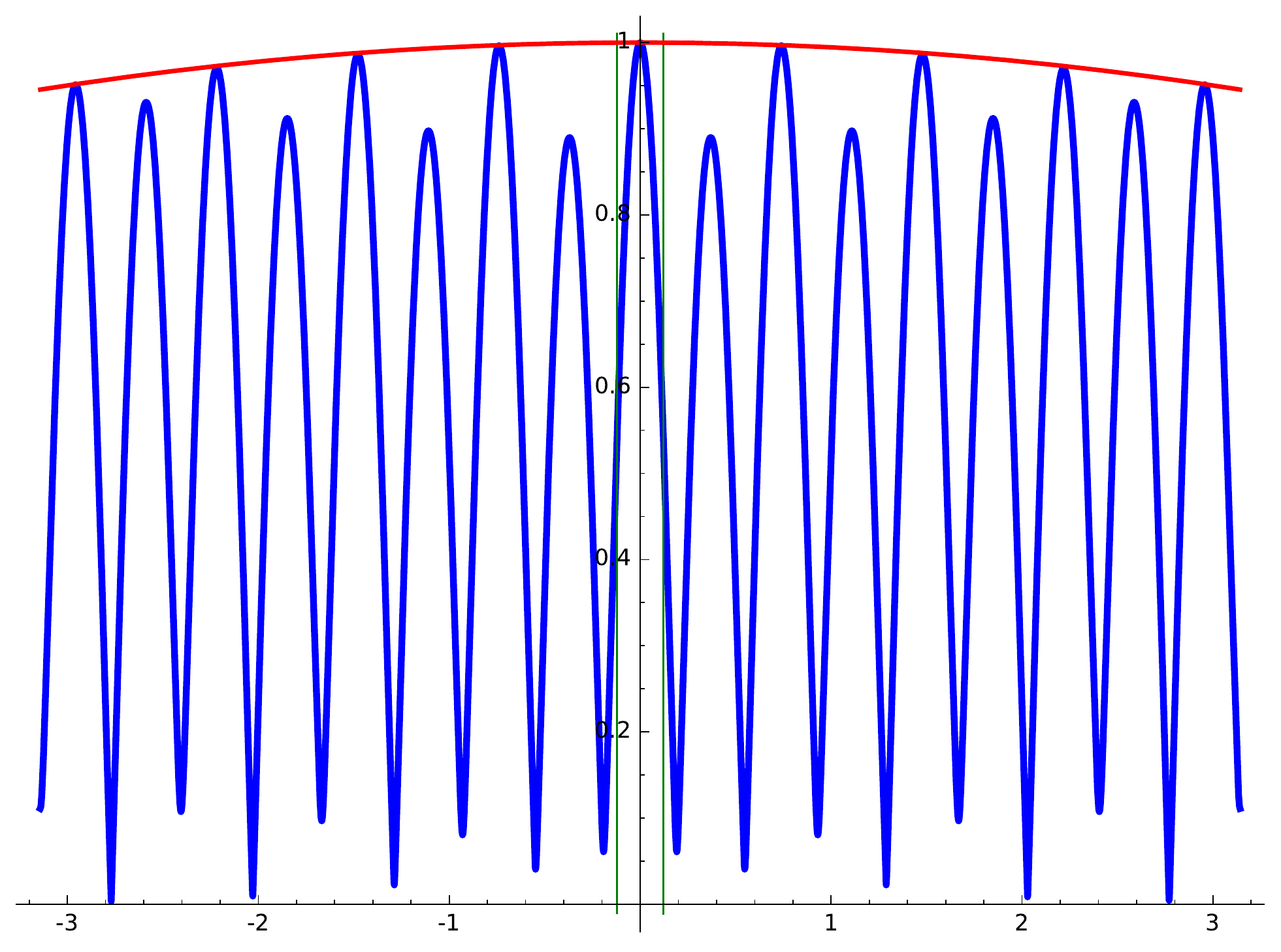}
\caption{$|f_Y(t)|$}
\end{figure}

There are several things that can be done to improve the bound on the
tail integral.  
The following extremely simple device made a dramatic improvement compared to the
$n_2$ given above.
The heights of the peaks in the graph of $|f(t)|$ (starting from $0$ and 
moving to the right) are
\[
1, 0.88989, 0.99645, 0.89768, 0.98621, 0.91204, 0.97048, 0.93077, 0.95118.
\]
The third peak is dominant. It occurs at $t = t_0 = 4\pi/17$ and has 
height $h_0 = .99645$.  The fifth peak has height $h_1 = .98621$ and is 
the second most dominant peak.  Let $f^*$ be
the function that is the constant $h_1$ on $[-\pi,-1/\sigma] \cup [1/\sigma,\pi]$ except 
for the small
section of the parabola $h_0\cdot(1-34(t-t_0)^2)$ that is above the line $y= h_1$,
and the mirror image of this parabola section at $t = -t_0$.
One can check that $f^*$ is an upper bound on $|f|$ outside of $[-1/\sigma,1/\sigma]$, 
and it is easy to estimate the integral of $f^*(t)^n/|t|$ on $1/\sigma \le |t| \le \pi$.

Using this tail bound we get an improved $n_2' = 1455$, which was much smaller
than we had expected.
The only significant contributions to the error are the $2q_2/n$ term
and the tail bound, which are, respectively, $0.02766$, and $0.012709$
so that the total error is just less than $|L|$:
\[
0.0276602867+0.01270946 = .040370408 < |L| = .0404226.
\]
All other contributions to the error are less than $10^{-6}$, and the tail
bound has been reduced to under half of the primary contribution.

One can push this further by considering all of the peaks and lowering the
``threshold'' to, say, the value of the CF at $1/\sigma$,
i.e., the value of the normalized CF $g(t)$ at $t = 1$.  This seems to
move $n_2'$ further down to an $n_2''$ just above 1200 (and we think that this is about the
limit of what can be done).
However, the time required to program this correctly far exceeds the time
that a computer takes to compute the tilts between 1200 and 1455, so 
our general philosophy says that it would be silly to implement this improvement.

Finally, we consider an example that originally motivated this investigation.
Consider three nontransitive dice,
that appeared in one of Martin Gardner's columns many  years ago,
whose PGFs are
\[
A = (z^2+z^6+z^7)/3, \qquad  B = (z^1+z^5+z^9)/3, \qquad C = (z^3+z^4+z^8)/3.
\]
Write $A>B$ to mean that
$\Prb(A>B) > \Prb(B>A)$, i.e., it is more likely that a roll of $A$ is larger 
than a roll of $B$ than the reverse.  This is the same thing as saying that
the tilt $T(A-B)$ of the difference $U = A-B$ is positive.  Note that the PGF
of $U$ is $A(z)B(z^{-1})$.  Similarly, let $V = B-C$ and $W = C-A$. 
It turns out that
\[
A > B, \; B > C, \; C > A, \quad \mbox{whereas} \quad A[2] < B[2], \; B[2] < C[2], \; C[2] < A[2].
\]
In fact, the various dominance orders oscillate until $n = n_0 = 8$ when $A$ becomes
dominant, and $B$ dominates~$C$, in the sense that
\[
A[n] > B[n], \; A[n]>C[n], \; B[n] > C[n], \quad \mbox{for all} \quad n \ge n_0.
\]
This can be verified by computer for $n$ as large as your hardware can go,
but to {\em prove} that asymptopia arrives at $n_0 = 8$ we use Theorem~\ref{Tnbound}.
It turns out that 
\[
U(z) = A(z)B(z^{-1}) = B(z)C(z^{-1}) = V(z)
\]
so there are really only two dice to which that theorem has to be applied: $U$ and $W$.
By now we can give a guess as to how large $n$ will have to be, namely, we
find the needed error $|L|$ and then choose $n$ large enough so that
\[
\sqrt{2\pi n} \; T_n \simeq
\sqrt{2\pi n} \; \frac{2 q_2}{n} < |L|, \; \mbox{i.e.}, \quad n > \frac{8\pi q_2^2}{L^2}.
\]
Since $q_2 = 1/5 + \nu_4/24$, for many random variables it is 
reasonable to bound $q_2$ by $1/4$, so that $n$ has to be at least as 
large as $\pi/(2L^2)$.

For $W$ we find that $L = 0.033310\ldots$, which is unusually small.
The approximation $n \simeq 8\pi q_2^2/L^2$ suggests $n_0 \le 1407$,
and in fact in this case $L$ is so small that $n$ is large enough so
that all other terms of the error are negligible.
In other words, the $n$ required by the principal error term so large that this 
gives the best possible value.

For $U$, the limit $L$ is larger, namely $L \simeq -0.14028\ldots$.
In this case the bound implied by the principal term is $n = 83$.
Although the main tail bound term is then large, we can apply the
earlier techniques of piecewise bounding the characteristic function,
to show that $n_0 \le 83$.  In other words, this shows that the smallest
possible bound is achievable with a bit more work.

All of the above experiments are summarized in the following table.
\[
\begin{array}{rrrrrrr}
    &  L        &   n_0   &   n_1 &       n_2   & n_2' & n_2'' \\ \hline
X_0 & -0.16446  &    4    &   59  &        74   &      &       \\  
X_1 &  0.43856  &    5    &    9  &        37   &      &       \\
X_2 & -0.16446  &    2    &   59  &        70   &      &       \\
X_3 & -0.76748  &    3    &    3  &        27   &      &       \\
Y   & -0.040422 &   761   &  682  &  182{,}024  & 1455 & 1206  \\
U   & -0.14028  &    9    &   83  &     1933    &   83 &       \\
W   &  0.03333  &    5    & 1407  &     4591    & 1407 &       \\
\end{array}
\]
where $n_2'$ is the result of replacing the tail bound using a little bit of work
(e.g., roughly where one could imagine that the necessary estimates could be
verified by hand, as in the simple improvement for~$Y$ above), and 
$n_2''$ is the result of replacing the tail bounds by
bounds that require a computer to perform all of the verifications, as in  the
more complicated improved estimate for $Y$ earlier.

We close with some further comments.

\begin{enumerate}
\item
The primacy of the $1/n$ term is a surprise.
If we take the ``poor man's approximation'' $q_2 \simeq 1/4$,
which is a good approximation unless $X$ takes very large values
with very small probability, then the error is about $1/2n$, independent
of~$X$!  And this says that the lower bound $n_1$, and often the arrival bound~$n_2$,
 is almost entirely determined by the target error $|L|$.
As a first guess, $n_2 \simeq \pi/(2L^2)$, especially if this number is large
enough so that the exponential terms are small; if this approximation to $n_2$ 
isn't that large then
further work might be required to decrease the tail bound.
\item
A curious philosophical difficulty is hiding in the weeds.
The value of $n_0$ becomes ``obvious'' from calculations when the sign
of the tilt becomes constant and stays there for as large an $n$ as one cares
to compute.  However, this gives no hint of how one might prove that this will
continue to be the case, and the point of the work here is to be able to actually
prove an upper bound on $n_0$.
What computer results are admissible in such a proof?
The computation of tilts would seem innocuous to many since any floating point error
can be easily bounded, and the programs are short and ``easily'' proved to be correct
--- the computer is ``just'' doing stable, well-understood arithmetic .
The error bounds in calculating $n_2$ straight from Theorem~\ref{Tnbound} can be 
done by hand, and perhaps the estimates for $n_2'$ could also be done by hand, though few, if any
people would do them nowadays without using a computer; the calculations needed to support
the determination of $n_2''$ seem to be intrinsically even more demanding.
\item
It would be interesting to apply these ideas here to more general situations
(more general RVs, approximation not at the mean, etc.).
Extending to higher order Edgeworth approximations seems viable, but
would require algebraic stamina.  We have thoughts on how this might be
automated. However, as noted earlier, we expect that the lower
bounds on $n$ will have to increase, so that the utility of this approach
isn't entirely clear.
\end{enumerate}

\bigskip

\noindent{\bf Acknowledgments. }  We would like to thank Richard Arratia, Steve Butler,
Persi Diaconis, Larry Goldstein, Fred Kochman, and Sandy Zabell for interesting and
useful feedback.

\bibliography{GP}
\bibliographystyle{amsalpha}

\end{document}